\documentclass{jgcc}

\pdfoutput=1
% jGCC Layouting Macros
\usepackage{lastpage}
\jgccdoi{13}{1}{2}{7036}
% for volume 15, issue 1, 3rd paper, permanent id (on episciences) 6059
\jgccheading{}{\pageref{LastPage}}{}{}%
{Jan.~02,~2021}{Mar.~24,~2021}{}

%% mandatory lists of keywords
\keywords{Free group, subgroup extension, onto extension, algebraic extension, Stallings graph}

\usepackage{mathrsfs}
\usepackage{graphicx}

\usepackage{amssymb}
\usepackage{amsmath}
\usepackage{amsthm}
\usepackage{stmaryrd}

%% read in additional TeX-packages or personal macros here:
%% e.g. \usepackage{tikz}
\usepackage{hyperref}
%%\input{myMacros.tex}
%% define non-standard environments BEYOND the ones already supplied
%% here, for example
\theoremstyle{plain} %\crefname{satz}{Satz}{S\"atze}
%% Do NOT replace the proclamation environments lready provided by
%% your own.

%% due to the dependence on amsart.cls, \begin{document} has to occur
%% BEFORE the title and author information:

\newcommand{\rk}{\mathrm{rk}}
\newcommand{\aut}{\mathrm{Aut}}

\newcommand{\bp}{\circledcirc}
\newcommand{\lab}{\operatorname{lab}}

\newcommand{\LL}{\mathcal{L}}
\newcommand{\FF}{\mathcal{F}}
\newcommand{\OO}{\mathcal{O}}
\newcommand{\AAEE}{\mathcal{AE}}
\newcommand{\CL}{\mathcal{C}l}
\newcommand{\OCL}{\mathcal{OC}l}
\newcommand{\FOCL}{f\mathcal{OC}l}

\newcommand{\leqfg}{\leqslant_{\textsf{fg}}}
\newcommand{\leqff}{\leqslant_{\textsf{ff}}}
\newcommand{\leqfi}{\leqslant_{\textsf{fi}}}
\newcommand{\leqalg}{\leqslant_{\textsf{alg}}}
\newcommand{\leqont}{\leqslant_{\textsf{ont}}}
\newcommand{\leqfont}{\leqslant_{\textsf{f.ont}}}

\begin{document}

\title{Onto extensions of free groups}

\author[S.~Mijares]{Sebasti\`a Mijares}	%required
\address{Departament d'Enginyeria Inform\`atica i de les Comunicacions, Universitat Aut\`onoma de Barcelona, CATALONIA}	%required
\email{sebastia.mijares@uab.cat} %optional
%\thanks{thanks 1, optional.}	%optional

\author[E.~Ventura]{Enric Ventura}	%optional
\address{Departament de Matem\`atiques, Universitat Polit\`ecnica de Catalunya, and Institut de Matem\`atiques de la UPC-BarcelonaTech, CATALONIA}	%optional
\email{enric.ventura@upc.edu} %optional
%\thanks{thanks 2, optional.}	%optional

%% required for running head on odd and even pages, use suitable
%% abbreviations in case of long titles and many authors:

%%%%%%%%%%%%%%%%%%%%%%%%%%%%%%%%%%%%%%%%%%%%%%%%%%%%%%%%%%%%%%%%%%%%%%%%%%%

%% the abstract has to PRECEDE the command \maketitle:
%% be sure not to issue the \maketitle command twice!

\begin{abstract}
\noindent An extension of subgroups $H\leqslant K\leqslant F_A$ of the free group of rank $|A|=r\geqslant 2$ is called \emph{onto} when, for every ambient basis $A'$, the Stallings graph $\Gamma_{A'}(K)$ is a quotient of $\Gamma_{A'}(H)$. Algebraic extensions are onto and the converse implication was conjectured by Miasnikov--Ventura--Weil, and resolved in the negative, first by Parzanchevski--Puder for rank $r=2$, and recently by Kolodner for general rank. In this note we study properties of this new type of extension among free groups (as well as the fully onto variant), and investigate their corresponding closure operators. Interestingly, the natural attempt for a dual notion --\emph{into} extensions-- becomes trivial, making a Takahasi type theorem not possible in this setting.
\end{abstract}

\maketitle

%% start the paper here:

The present paper is an elaborated version of Mijares~\cite{tfm}, the masters thesis defended by the first author at the Universitat Polit\`{e}cnica de Catalunya in July 2020. The goal of this masters thesis was to understand the recent interesting counterexample given by Kolodner~\cite{kolodner} to a previous conjecture from Miasnikov--Ventura--Weil~\cite{MVW}, and to investigate further the new notions of onto and fully onto extensions of free groups motivated by it.

The paper is organized as follows: in Section~1 we present the necessary context and background about free groups and Stallings graphs needed for the development; in Section~2 we reformulate the Miasnikov--Ventura--Weil conjecture and prove it, following an idea of Parzanchevski--Puder (who gave the first counterexample to the original version of the conjecture); in Section~3 we study the new concepts of onto and fully onto extensions of free groups arising from Kolodner's more elaborated counterexample to the strengthened Parzanchevski--Puder version of the conjecture; in Section~4, we study onto and fully onto closures of subgroups, and observe that the natural attempt for a dual notion (that of into entensions) becomes trivial, eliminating the possibility of a new version of Takahasi's theorem in this setting; finally, in Section~5, we state and comment some open questions arising naturally in this context.

All along the paper we write arguments on the left and homomorphisms on the right, $g\mapsto g\alpha$, and compositions accordingly, $g\mapsto g\alpha \mapsto (g\alpha)\beta=g\alpha\beta$.

\section{Context and background}

Let $A=\{a_1, \ldots ,a_r\}$ be an alphabet of $r$ letters, let $A^{\pm}=\{a_1, \ldots ,a_r, a_1^{-1}, \ldots ,a_r^{-1}\}$ be its formal involutive closure, and let $F_A$ be the free group on $A$ (formally, the free monoid on $A^{\pm}$ modulo the equivalence relation generated by elementary reduction, $a_ia_i^{-1}\sim a_i^{-1}a_i \sim 1$).

In 1983, elaborating on previous ideas by several authors, Stallings~\cite{Sta} established the notion of so-called \emph{Stallings $A$-automata}: oriented graphs (allowing loops and parallel edges) with labels from $A^{\pm}$ at the edges, being \emph{involutive} (i.e., for every edge $e$ from $p$ to $q$ with label $a$, there is another one $e^{-1}$ from $q$ to $p$ and labelled $a^{-1}$; $e$ and $e^{-1}$ are said to be inverse to each other), with a selected vertex called the \emph{basepoint} (denoted $\bp$), and being connected, \emph{deterministic} (no two different edges with the same label from, or into, the same vertex) and \emph{trim} (every vertex appears in some reduced closed path at the basepoint). Here, a path is called \emph{reduced} if it has no backtracking, i.e., no crossings of an edge immediately followed by its inverse (equivalently in the deterministic case, a path is reduced if and only if its label is a reduced word).

With this notion, Stallings~\cite{Sta} established a bijection between the set of (free) subgroups of $F_A$ and the set of isomorphism classes of \emph{Stallings $A$-automata}. Further, finitely generated subgroups correspond to finite Stallings $A$-automata and, when restricted to these subsets, the Stallings bijection is algorithmic friendly, i.e., there are fast algorithms for both directions: given a set of words $h_1,\ldots ,h_n$ on $A$, one can compute the Stallings graph $\Gamma_A(H)$ corresponding to the subgroup they generate, $H=\langle h_1, \ldots ,h_n \rangle\leqslant F_A$. And, given a Stallings $A$-automaton $\Gamma$, one can compute a basis for its \emph{language} subgroup $\LL(\Gamma)\leqslant F_A$ (the set of labels of reduced closed paths at the basepoint in $\Gamma$). We assume the reader is familiar with this theory; see~\cite{KM, MVW, Sta} for details.

\begin{thm}[Stallings, \cite{Sta}]
The following is a bijection:
 $$
\begin{array}{rcl} St \colon \{H\leqslant F_A\} & \longrightarrow & \{\mbox{isom. classes of Stallings $A$-automata}\} \\ H & \mapsto & \Gamma_{A}(H) \\ \LL(\Gamma) & \mapsfrom & \Gamma.\end{array}
 $$
Furthermore, $H\leqfg F_A$ if and only if $\Gamma_A(H)$ is finite; in this case, both directions are computable. \qed
\end{thm}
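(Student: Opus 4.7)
The plan is to construct both directions of the claimed bijection explicitly and prove they are mutually inverse, deriving finiteness and computability as by-products. For $H\mapsto \Gamma_A(H)$, I start from any generating set $\{h_i\}$ of $H$ and form the \emph{bouquet}: a basepoint $\bp$ with, for each $h_i$, a petal consisting of a reduced path labelled by $h_i$ (together with its involutive mirror). Then I iteratively perform \emph{Stallings folds}---identifying pairs of edges sharing a source and carrying the same label in $A^{\pm}$. The (direct limit of the) resulting graph is involutive, deterministic, connected and trim, hence a Stallings $A$-automaton, which I christen $\Gamma_A(H)$. In the other direction, for a Stallings $A$-automaton $\Gamma$, let $\LL(\Gamma)$ be the set of labels of reduced closed paths at $\bp$; determinism plus the involutive structure make $\LL(\Gamma)$ closed under concatenation (with free reduction) and inversion, so $\LL(\Gamma)\leqslant F_A$.

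Three things then need to be checked. First, well-definedness of $\Gamma_A(H)$: the output depends only on $H$, not on the chosen generators nor the order of folds. The cleanest route is a \emph{universal property}---any involutive pointed deterministic $A$-graph whose language contains $H$ admits a unique label-preserving morphism from the folded bouquet---together with local confluence of folds and termination on finite bouquets (extended to the infinitely generated case via a direct limit). Second, the two compositions are identities: the inclusion $H\subseteq \LL(\Gamma_A(H))$ is immediate since the petals read off the generators, and the reverse inclusion amounts to unwinding any reduced closed path into a word on those generators; meanwhile $\Gamma_A(\LL(\Gamma))\cong\Gamma$ is the same universal property applied to $\LL(\Gamma)$.

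For the finiteness statement, if $H$ is finitely generated one can start with a finite bouquet, and folding only merges vertices, so $\Gamma_A(H)$ is finite. Conversely, if $\Gamma_A(H)$ is finite, fix a spanning tree $T$ and, for each edge $e\notin T$, form the element $t_{\iota(e)}\cdot\lab(e)\cdot t_{\tau(e)}^{-1}\in F_A$, where $t_v$ is the $T$-geodesic label from $\bp$ to $v$; a standard Nielsen--Schreier-style argument shows these finitely many elements form a free basis of $\LL(\Gamma_A(H))=H$. Both constructions are effective: folding is a rewriting procedure over finite data with polynomially many steps, and reading off the spanning-tree basis is linear in $|\Gamma_A(H)|$.

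The main obstacle I foresee is precisely the well-definedness of $\Gamma_A(H)$: confluence of the folding rewriting system and canonicity with respect to the chosen presentation. Everything else---the inverse property, finiteness, and computability---follows cleanly once $\Gamma_A(H)$ is known to be canonically attached to $H$.
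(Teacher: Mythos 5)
Your reconstruction follows the standard Stallings route---bouquet (flower) automaton, iterated folding, confluence for well-definedness, spanning-tree basis for the inverse direction, finiteness and computability---which is exactly the construction the paper sketches in the paragraph following the theorem (the paper itself gives no proof, citing \cite{Sta}). One small imprecision worth tightening: to get $\Gamma_A(\LL(\Gamma))\cong\Gamma$ you invoke the universal property, but that by itself only furnishes a canonical morphism $\Gamma_A(\LL(\Gamma))\to\Gamma$; to upgrade it to an isomorphism you should also observe that a deterministic, trim, connected $\Gamma$ is itself the result of folding the flower automaton on its spanning-tree generators (so it already enjoys the minimality that the universal property singles out), forcing the morphism to be a bijection.
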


From now on, we will mostly consider involutive $A$-automata and will describe and draw them just mentioning their positive part (i.e., those edges labelled by letters in $A$); next to each positive edge we implicitly assume its inverse is also there (even if we do not mention it), ready to be used by paths around the automaton. The positive subautomaton of $\Gamma$ is denoted $\Gamma^+$ (and it is obviouly not involutive, unless edgeless).

For later use, we briefly explain how the above mentioned algorithms work. Suppose we are given a finite set of reduced words, $W=\{ h_1,\ldots ,h_n\}\subset F_A$. Draw the so-called \emph{flower automaton} $\FF(W)$: for each $i=1,\ldots ,n$, consider a circular graph with $A$-labels at the edges (usually called a \emph{petal}) in such a way that, when travelled around, it spells the word $h_i=h_i(a_1, \ldots ,a_r)$ (or its inverse if travelled in the opposite direction), and glue all of them together along their basepoints $\bp$. Note that $\FF(W)$ is trim, deterministic except maybe at the basepoint, and satisfies $\LL(\FF(W))=H$, where $H=\langle W\rangle \leqslant F_A$. To obtain a deterministic automaton, successively apply elementary foldings: whenever we have two edges with the same label from, or into, the same vertex, identify them into a single one (as well as their terminal, or initial, vertices). This process always terminates in a finite number of steps, and it can be proven that the final $A$-automaton obtained in this way, denoted $\Gamma_A(H)$, is deterministic, trim, reads the same language $\LL(\Gamma_A(H))=\LL(\FF(W))=H$, and, more significantly, is independent from the specific sequence of foldings applied, and even from the set of generators of $H$ we started with: the Stallings graph $\Gamma_A(H)$ only depends on the subgroup $H\leqslant F_A$ (and on the ambient basis $A$ chosen to work with). Conversely, given a finite Stallings $A$-automaton $\Gamma$, we can choose a spanning tree $T$ (more precisely, the involutive closure of a spanning tree of $\Gamma^+$), and get the basis of $H=\LL(\Gamma)\leqslant F_A$ given by $\{h_e \mid e\in E\Gamma^+\setminus ET\}$, where $h_e=\lab(T[\bp, \iota e]eT[\tau e, \bp])$ and $T[p,q]$ stands for the unique reduced path in $T$ from vertex $p$ to vertex $q$, i.e., the \emph{$T$-geodesic} from $p$ to $q$.

%In automata theory an $A$-automaton is involutive if, for every $a$-labelled edge from $p$ to %$q$, there is an $a^{-1}$-labelled edge from $q$ to $p$; and paths are defined in the oriented %sense, they are allowed to cross edges only in the forward direction. Equivalently, we prefer %to think $A$-automata only with the positive edges (those labelled by letters in $A$) and %understand paths in the undirected fashion, allowing them to cross edges in both directions %(and reading $a$ or $a^{-1}$ accordingly). In this sense the underlying graph is undirected %(and the direction of edges is only meant to indicate which orientation carries the label $a$ %and which $a^{-1}$).

By \emph{degree} of a vertex $p$ in an involutive $A$-automaton $\Gamma$ we mean the out-degree of $p$. Note that this equals the in-degree of $p$ in $\Gamma$, and also the total (in- plus out-) degree of $p$ in its positive part $\Gamma^+$.

For an $A$-automaton $\Gamma$, define its \emph{core}, denoted $c(\Gamma)$, as its largest trim subautomaton, i.e., that determined by the vertices and edges appearing in some reduced closed path at $\bp$; so, $\Gamma$ is trim if and only if $c(\Gamma)=\Gamma$. It is easy to see that, when $\Gamma$ is finite and connected, this is equivalent to saying that no vertex in $\Gamma$ has degree 1 except maybe $\bp$; in this case, one can get $c(\Gamma)$ from $\Gamma$ by applying finitely many times the trim operation: remove a vertex of degree one different from $\bp$ (together with the corresponding edge).

An \emph{$A$-automata homomorphism} (\emph{$A$-homomorphism}, for short) from $\Gamma$ to $\Gamma'$ is a pair of maps $\theta=(\theta_V, \theta_E)$, $\theta_V\colon V\Gamma \to V\Gamma'$ and $\theta_E\colon E\Gamma \to E\Gamma'$ (where $V\Gamma$ and $E\Gamma$ denote the sets of vertices and edges of $\Gamma$, respectively), such that $\bp\theta =\bp'$ and, for every $a$-labelled edge $e$ from $p$ to $q$ in $\Gamma$, $e\theta_E$ is an $a$-labelled edge from $p\theta_V$ to $q\theta_V$ in $\Gamma'$; we shall abuse language and write $\theta=\theta_V=\theta_E$. Such a $\theta$ is called \emph{onto} (resp. \emph{into}) if both $\theta_V$ and $\theta_E$ are onto (resp. into). For an onto $A$-homomorphism we will use the notation $\theta\colon \Gamma \twoheadrightarrow \Gamma'$. Observe that, for $\Gamma$ connected and $\Gamma'$ deterministic, there exists at most one $A$-homomorphism from $\Gamma$ to $\Gamma'$.

Stallings bijection behaves well with respect to inclusions in the sense that, for two subgroups $H,K\leqslant F_A$, $H\leqslant K$ if and only if there is an $A$-homomorphism from $\Gamma_A(H)$ to $\Gamma_A(K)$ which, in this case, is unique and will be denoted $\theta_{H,K}\colon \Gamma_A(H) \to \Gamma_A(K)$.

Since 1983, the Stallings bijection became central for the modern understanding and study of the lattice of subgroups of a free group. With the development of these graphical techniques, many new results have been obtained about free groups and their subgroups. Also, most of the results known before Stallings~\cite{Sta} have been reproven using graphical techniques, usually with conceptually simpler and more transparent proofs. Takahasi theorem is a typical example illustrating this fact.

An extension of subgroups $H\leqslant K\leqslant F_A$ is called \emph{free} (we also say that $H$ is a \emph{free factor} of $K$), denoted $H\leqff K$, whenever some (so, any) basis of $H$ can be extended to a basis for $K$; for example, it is easy to see that, for $H\leqslant K\leqslant F_A$, if $\Gamma_A(H)$ is a subautomaton of $\Gamma_A(K)$ then $H\leqff K$ (just extend a spanning tree for $\Gamma_A(H)$ to a spanning tree for $\Gamma_A(K)$), while the converse is far from true. The notion of free factor is the non-abelian version of the notion of direct summand from commutative algebra. In a vector space, every pair of subspaces $E\leqslant F$ is in direct sum position, $E\leqslant_{\oplus} F$, i.e., $F=E\oplus E'$ for some complementary subspace $E'$. When we consider, for example, free abelian groups (i.e., free modules over $\mathbb{Z}$) the exact same result is not true, but it works if we admit a bit of flexibility: every subgroup $H\leqslant \mathbb{Z}^m$ is of finite index only in finitely many subgroups $H=H_0\leqfi H_1, \ldots ,H_n\leqslant \mathbb{Z}^m$ and, for every $K\leqslant \mathbb{Z}^m$ containing $H$, there exists a unique $i$ such that $H\leqfi H_i\leqslant_{\oplus} K\leqslant \mathbb{Z}^m$. Of course, the situation in the free group seems much wilder, starting from the well known fact that $H\leqslant K\leqslant F_A$ does not even imply $\rk(H)\leqslant \rk(K)$ (to the extreme that $F_{\aleph_0}$ can be viewed as a subgroup of $F_2$). However, back in the 1950's, Takahasi~\cite{Tak} proved that, again, the same result adapts to the free group case, after admitting a little bit more of degeneration: we will have to restrict ourselves to finitely generated subgroups, and we will lose the finite index condition.

Takahasi theorem was proved 70 years ago using purely combinatorial and algebraic techniques. However, in more recent years, it was rediscovered independently, by Ventura~\cite{V} in 1997, by Margolis--Sapir--Weil~\cite{MSW} in 2001, and by Kapovich--Miasnikov~\cite{KM} in 2002, in slightly different contexts; see also the subsequent paper Miasnikov--Ventura--Weil~\cite{MVW} joining the three points of view. These authors, independently, gave their own proofs of Takahasi's theorem, and they happened to be essentially the same proof; we would say, the ``natural" proof of this result using Stallings graphs. Let us sketch it here, since it will play a central role along the rest of the paper.

\begin{thm}[Takahasi, \cite{Tak}]\label{Taka}
Let $H\leqfg F_A$. Then $H$ determines finitely many finitely generated extensions $H=H_0, H_1, \ldots ,H_n\leqslant F_A$ such that, for every $K\leqslant F_A$ containing $H$, there exists $i$ such that $H\leqslant H_i\leqff K\leqslant F_A$.
\end{thm}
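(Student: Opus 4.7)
The plan is to extract the candidate list $H_0,\ldots,H_n$ directly from the finite Stallings graph $\Gamma:=\Gamma_A(H)$ using the dictionary between subgroups containing $H$ and deterministic $A$-quotients of $\Gamma$. Concretely, I would consider all $A$-automata $\Delta$ obtainable from $\Gamma$ by choosing an equivalence relation on $V\Gamma$, identifying equivalent vertices, and then performing elementary foldings until the result is deterministic. Up to $A$-isomorphism there are only finitely many such $\Delta$, since $|V\Delta|\leqslant |V\Gamma|$ and the out-degree at each vertex is at most $r$. For each such $\Delta$, its core $c(\Delta)$ is finite, connected, trim and deterministic, so it equals $\Gamma_A(H_\Delta)$ for the finitely generated subgroup $H_\Delta:=\LL(\Delta)=\LL(c(\Delta))\leqslant F_A$. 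Enumerate the resulting finite family as $H=H_0,H_1,\ldots,H_n$.

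Next, fix an arbitrary $K\leqslant F_A$ with $H\leqslant K$ (not necessarily finitely generated). Stallings' bijection provides a unique $A$-homomorphism $\theta:=\theta_{H,K}\colon \Gamma \to \Gamma_A(K)$. Let $\Delta$ be its image subautomaton inside $\Gamma_A(K)$: this is the (finite) subautomaton spanned by the images of the vertices and edges of $\Gamma$, which is well-defined and automatically deterministic because $\Gamma_A(K)$ is. By construction, $\theta$ factors as a surjection $\Gamma\twoheadrightarrow\Delta$ followed by the inclusion $\Delta\hookrightarrow \Gamma_A(K)$, so $\Delta$ is a deterministic $A$-quotient of $\Gamma$. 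Therefore $H_\Delta$ appears in our list, say as $H_i$, with $\Gamma_A(H_i)=c(\Delta)$.

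It remains to verify the two inclusions $H\leqslant H_i\leqff K$. For the first, any $h\in H$ is the label of a reduced closed path $\pi$ at $\bp$ in $\Gamma$; its image $\pi\theta$ is a closed path in $\Delta$ with the same (reduced) label $h$, and by determinism of $\Delta$ this image path is itself reduced (any backtracking $e,e^{-1}$ in a deterministic automaton forces the consecutive labels $a,a^{-1}$, contradicting the reducedness of $h$). Hence $h\in \LL(\Delta)=H_i$, giving $H\leqslant H_i$. For the second inclusion, $\Gamma_A(H_i)=c(\Delta)$ sits as a subautomaton of $\Gamma_A(K)$, and by the remark recalled before the theorem (a spanning tree of $\Gamma_A(H_i)$ extends to one of $\Gamma_A(K)$, producing a basis of $H_i$ extending to a basis of $K$) this yields $H_i\leqff K$.

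The main obstacle is step two: verifying that the set-theoretic image of $\theta_{H,K}$ inside $\Gamma_A(K)$ is indeed a deterministic $A$-quotient of $\Gamma$ in exactly the sense used to define the finite list of candidates, so that one can identify it with one of the precomputed $H_i$. This hinges squarely on the determinism of $\Gamma_A(K)$, which ensures that the image inherits a well-defined $A$-automaton structure and that no further folding beyond the identifications already induced by $\theta$ on $V\Gamma$ is needed. Once this identification is in place, finiteness of the list, the inclusion $H\leqslant H_i$, and the free-factor property $H_i\leqff K$ all drop out of the standard Stallings-graph machinery.
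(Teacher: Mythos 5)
Your proposal is correct and follows essentially the same route as the paper's sketch: define the finite candidate family as the overgroups obtained by identifying vertices of $\Gamma_A(H)$ and folding, then for an arbitrary $K\supseteq H$ observe that the image of the canonical $A$-homomorphism $\theta_{H,K}$ is simultaneously such a quotient of $\Gamma_A(H)$ (hence on the list) and a subautomaton of $\Gamma_A(K)$ (hence a free factor of $K$). You merely supply the details the sketch leaves implicit — finiteness of the candidate list, the reducedness of image paths via determinism, and the identification of the image with a precomputed quotient — without altering the argument.
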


\begin{proof}[Sketch of proof]
Given $H\leqfg F_A$, consider its (finite) Stallings graph $\Gamma_A(H)$. Identifying certain sets of vertices, we get a new $A$-automaton which may very well be not deterministic; apply then a sequence of foldings until obtaining a deterministic one, say $\Gamma_1$, which will correspond to a finitely generated overgroup of $H$, say $H\leqslant \LL(\Gamma_1)\leqslant F_A$. Repeating this operation for every possible partition on the set of vertices $V\Gamma_A(H)$ (and possibly getting the same result for different initial partitions), we obtain a finite number of overgroups of $H$, say $\OO_A (H)=\{H=H_0, H_1, \ldots ,H_n\}$, called the \emph{$A$-fringe of $H$} in~\cite{V}, which satisfies the statement of Takahasi theorem. In fact, let $K$ be a (non-necessarily finitely generated) subgroup with $H\leqslant K\leqslant F_A$, and look at the corresponding $A$-homomorphism $\theta_{H,K}\colon \Gamma_A(H)\to \Gamma_A(K)$. Looking at the image $\operatorname{Im} (\theta_{H,K})$ as a (finite and deterministic) subautomaton of $\Gamma_A(K)$, we see that: (1) the $A$-homomorphism $\theta_{H,K}\colon \Gamma_A(H)\twoheadrightarrow \operatorname{Im}(\theta_{H,K})$ is onto and so, $\LL(\operatorname{Im}(\theta_{H,K}))\in \OO_A(H)$; and (2) $\operatorname{Im}(\theta_{H,K})$ is a subautomaton of $\Gamma_A(K)$ and so, $\LL(\operatorname{Im}(\theta_{H,K}))\leqff K$. This proves that the $A$-fringe of $H$ is a finite family of finitely generated overgroups of $H$ satisfying the property stated in Takahasi theorem (a \emph{Takahasi family}, for short).
\end{proof}

As done in Kapovich--Miasnikov~\cite{KM} and in Miasnikov--Ventura--Weil~\cite{MVW}, it is possible to clean up the \emph{fringe of $H$}, $\OO_A(H)$, in order to obtain a minimal Takahasi family, and gain uniqueness of the middle subgroup $H_i$ in Theorem~\ref{Taka}; moreover, this minimal Takahasi family will have a clear algebraic meaning as follows. A subgroup extension $H\leqslant K\leqslant F_A$ is called \emph{algebraic}, denoted $H\leqalg K$, if $H$ is not contained in any proper free factor of $K$; denote by $\AAEE(H)$ the set of algebraic extensions of $H$ (within $F_A$). By Takahasi's theorem, $\AAEE(H)\subseteq \OO_A(H)$ and so, $|\AAEE(H)|<\infty$ for every finitely generated $H$. Furthermore it can be seen that, applying the following cleaning process to $\OO_A(H)$, one obtains precisely $\AAEE(H)$: \emph{for each pair of distinct subgroups $H_i, H_j\in \OO_A(H)$, if $H_i\leqff H_j$ then delete $H_j$ from the list}; see~\cite{MVW} for details.

Using the language of algebraic extensions, one can deduce the following variant (and slight improvement) of Takahasi's theorem:

\begin{thm}[{\cite[Thm.~2.6, Prop.~3.9, Thm.~3.16]{MVW}}]\label{alg cl}
Let $H\leqfg F_A$. The set of algebraic extensions $\AAEE(H)$ is finite and computable; further, it satisfies that, for every $H\leqfg K\leqslant F_A$, there exists a unique intermediate subgroup $L$ such that $H\leqalg L\leqff K\leqslant F_A$. This $L$ is called the \emph{$K$-algebraic closure of $H$}, denoted $L=\CL_K(H)$, and coincides both with the smallest free factor of $K$ containing $H$, and with the largest algebraic extension of $H$ contained in $K$. \qed
\end{thm}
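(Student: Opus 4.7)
\emph{Finiteness and computability.} The plan is to deduce everything from Takahasi's theorem (Theorem~\ref{Taka}) together with the cleaning procedure sketched right after it. First I would verify the containment $\AAEE(H)\subseteq \OO_A(H)$: if $H\leqalg L$, applying Takahasi to $H\leqslant L$ produces some $H_i\in\OO_A(H)$ with $H\leqslant H_i\leqff L$, and algebraicity of $L$ over $H$ forces $H_i=L$. Since $\OO_A(H)$ is finite and computable, finiteness of $\AAEE(H)$ follows. To actually extract $\AAEE(H)$ I would apply the cleaning step: delete $H_j\in\OO_A(H)$ whenever some distinct $H_i\in\OO_A(H)$ satisfies $H\leqslant H_i\leqff H_j$; the predicate $H_i\leqff H_j$ is decidable on Stallings graphs, for instance via Whitehead's algorithm. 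That the surviving list is precisely $\AAEE(H)$ is a pair of easy implications: a deleted $H_j$ admits the proper free factor $H_i\supseteq H$, so $H_j$ is not algebraic over $H$; conversely, if $H_j$ is not algebraic, a proper intermediate $H\leqslant M\leqff H_j$ fed into Takahasi on $H\leqslant M$ produces some $H_i\in\OO_A(H)$ with $H\leqslant H_i\leqff M\leqff H_j$, and $H_i\neq H_j$ because $H_i\leqff M\subsetneq H_j$.

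\emph{Existence, uniqueness and the two descriptions of $L$.} Fix $H\leqfg K\leqslant F_A$ and set $L_0:=\LL(\operatorname{Im}(\theta_{H,K}))$. The proof of Theorem~\ref{Taka} already gives $L_0\in\OO_A(H)$ and $L_0\leqff K$ (as $\operatorname{Im}(\theta_{H,K})$ is a subautomaton of $\Gamma_A(K)$). Cleaning downwards from $L_0$ inside $\OO_A(H)$ produces some $L\in\AAEE(H)$ with $H\leqalg L\leqff L_0\leqff K$, hence $H\leqalg L\leqff K$. For uniqueness (and the two dual descriptions), I would argue that this $L$ is the smallest free factor of $K$ containing $H$: for any $M\leqff K$ with $H\leqslant M$, the inclusion $\operatorname{Im}(\theta_{H,K})\subseteq \operatorname{Im}(\theta_{M,K})$ inside $\Gamma_A(K)$, combined with the fact that $M\leqff K$ (so that the image $\operatorname{Im}(\theta_{M,K})$ behaves like $\Gamma_A(M)$ up to a well-controlled collapse), should force $L_0\leqslant M$ and hence $L\leqslant M$. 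Symmetrically, any finitely generated algebraic extension $H\leqalg L'\leqslant K$ lies in $\OO_A(H)$, and the minimality of $L$ squeezes $L\leqslant L'$; then algebraicity of $L'$ upward from $L$ yields $L=L'$.

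\emph{Main obstacle.} The delicate step is precisely the comparison of $L$ with an arbitrary free factor $M$ of $K$ containing $H$: a free factor $M\leqff K$ does not in general sit as a subautomaton of $\Gamma_A(K)$, so the minimality of $L_0$ among such free factors is not automatic at the graph level. A clean way around is to invoke the minimal Takahasi family structure of~\cite{MVW}, where inclusions inside $\OO_A(H)$ are organised via the free-factor order and the unique algebraic closure emerges directly; alternatively, one can argue on rank via the Nielsen--Schreier formula applied to the images $\operatorname{Im}(\theta_{M,K})$ and $\operatorname{Im}(\theta_{H,K})$ inside $\Gamma_A(K)$.
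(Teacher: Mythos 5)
The finiteness and computability part of your proposal is essentially correct and matches the cleaning procedure the paper sketches: the containment $\AAEE(H)\subseteq\OO_A(H)$, the decidability of $\leqff$ on Stallings graphs, and the two-way check that the surviving list after cleaning is exactly $\AAEE(H)$ are all sound. So is the \emph{existence} of some $L$ with $H\leqalg L\leqff K$: set $L_0=\LL(\operatorname{Im}(\theta_{H,K}))$, pick $L$ minimal (under inclusion, equivalently under $\leqff$) among $\{H_i\in\OO_A(H) : H_i\leqff L_0\}$, and Takahasi applied to any proper $H\leqslant M\leqff L$ produces an $H_i\in\OO_A(H)$ strictly below $L$, a contradiction; hence $H\leqalg L\leqff L_0\leqff K$.

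The gap is exactly where you flag it, and it is real. Comparing $\operatorname{Im}(\theta_{H,K})$ with $\operatorname{Im}(\theta_{M,K})$ for an arbitrary $M\leqff K$ containing $H$ cannot give $L_0\leqslant M$, because $\LL(\operatorname{Im}(\theta_{M,K}))$ is in general strictly larger than $M$ (e.g.\ for $M=\langle ab\rangle\leqff F_{\{a,b\}}$ the image is all of $\Gamma_A(F_{\{a,b\}})$). The Nielsen--Schreier rank count does not repair this either: free factorness is not a rank condition, and $M$ does not embed as a subautomaton of $\Gamma_A(K)$. The missing ingredient is not graph-theoretic minimality of $\operatorname{Im}(\theta_{H,K})$ but the closure properties that the paper records in Propositions~\ref{intersectionfreefactors} and~\ref{compositionfreefactors} and explicitly advertises as the reason ``smallest'' and ``largest'' make sense. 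Concretely: by Proposition~\ref{intersectionfreefactors}(i) the intersection $L_1$ of all free factors of $K$ containing $H$ is again a free factor of $K$ containing $H$, hence the smallest; by Proposition~\ref{intersectionfreefactors}(ii) the join $L_2$ of all algebraic extensions of $H$ inside $K$ is again algebraic over $H$, hence the largest. Then $H\leqalg L_1$ (a proper $H\leqslant M\leqff L_1$ would be, by Proposition~\ref{compositionfreefactors}(i$'$), a strictly smaller free factor of $K$ containing $H$), so $L_1\leqslant L_2$; and $L_1\leqff K$ together with $L_1\leqslant L_2\leqslant K$ gives $L_1\leqff L_2$ by Proposition~\ref{compositionfreefactors}(ii$'$), which combined with $H\leqalg L_2$ forces $L_1=L_2$. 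Uniqueness of $L$ with $H\leqalg L\leqff K$ and the identification with $\CL_K(H)$ follow at once. Your image construction still usefully exhibits one such $L$; the smallest/largest structure is lattice-theoretic, not automaton-theoretic.
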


In Theorem~\ref{alg cl}, the words \emph{smallest} and \emph{largest} make sense because of the following basic properties of free and algebraic extensions:

\begin{prop}\label{intersectionfreefactors}
Let $H_i \leqslant K_i \leqslant F_A$ be a collection of subgroup extensions in $F_A$, $i \in I$. Then,
%\begin{enumerate}[label=({\it\roman*}),itemsep=5pt]
\begin{enumerate}[(i)]
\item (\cite[Lem.~2.4]{MVW}) $H_i \leqff K_i$, $\forall i\in I$ $\Rightarrow$ $\cap_{i\in I} H_i \leqff \cap_{i\in I} K_i$;
\item (\cite[Prop.~3.12]{MVW}) $H_i \leqalg K_i$, $\forall i\in I$ $\Rightarrow$ $\langle H_i, i\in I\rangle \leqalg \langle K_i, i\in I\rangle$. \qed
\end{enumerate}
\end{prop}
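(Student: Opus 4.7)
The plan is to establish (i) first (by essentially invoking Kurosh's subgroup theorem for free products) and then to deduce (ii) from (i) by a short formal argument using only the definitions of free factor and algebraic extension.

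For part (i), I would first reduce to the convenient special case: \emph{if $H \leqff K$ and $L \leqslant K$, then $L \cap H \leqff L$.} This is proved by writing $K = H \ast H'$ (possible since $H \leqff K$) and applying Kurosh's subgroup theorem to $L \leqslant H \ast H'$; the Kurosh decomposition of $L$ presents it as a free product of a free group and conjugate intersections of $L$ with $H$ or $H'$, one of which, corresponding to the trivial double-coset representative on the $H$-side, is exactly $L \cap H$. Now, setting $K := \bigcap_{i \in I} K_i$, the special case applied with $L := K \leqslant K_i$ to each pair $H_i \leqff K_i$ yields $K \cap H_i \leqff K$ for every $i$. Since $\bigcap_i H_i \subseteq \bigcap_i K_i = K$, this gives $\bigcap_i H_i = \bigcap_i (K \cap H_i)$, reducing the problem to the assertion that \emph{an arbitrary intersection of free factors of a common group $K$ is a free factor of $K$}. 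For finite intersections this follows by iterating the same special case (with $L$ a partial intersection); for infinite $I$, one concludes by a directed-limit/pullback-core argument on the Stallings graphs of the partial intersections.

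For part (ii), set $H := \langle H_i : i \in I\rangle$ and $K := \langle K_i : i \in I\rangle$, and assume $H \leqslant L \leqff K$; it suffices to show $L = K$. For each $i$, apply part (i) to the two extensions $L \leqff K$ and $K_i \leqff K_i$ (the latter trivially) to obtain $L \cap K_i \leqff K \cap K_i = K_i$. Since $H_i \leqslant H \leqslant L$ and $H_i \leqslant K_i$, we have $H_i \leqslant L \cap K_i \leqff K_i$; but $H_i \leqalg K_i$ forbids any proper free factor of $K_i$ from containing $H_i$, so $L \cap K_i = K_i$, i.e.\ $K_i \leqslant L$. Taking the join over $i \in I$ gives $K = \langle K_i : i\in I\rangle \leqslant L$, hence $L = K$, as required.

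The principal obstacle is the infinite-index case of (i): when the intersection fails to be finitely generated, one cannot directly run the Stallings-graph argument on a finite core, and the passage from the finite-$I$ case to the general statement demands a careful limiting argument that preserves the free-factor structure inherited from each individual extension $H_i \leqff K_i$.
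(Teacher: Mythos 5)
The paper gives no proof of this proposition---it simply cites \cite[Lem.~2.4, Prop.~3.12]{MVW}---so there is no in-paper argument to compare yours against. On its own merits, your deduction of (ii) from (i) is clean and correct: given $H:=\langle H_i \colon i\in I\rangle \leqslant L \leqff K := \langle K_i \colon i\in I\rangle$, part (i) applied to the two-element family $\{L \leqff K,\ K_i\leqff K_i\}$ yields $L\cap K_i \leqff K\cap K_i = K_i$, and since $H_i\leqslant L\cap K_i$ while $H_i\leqalg K_i$, this forces $L\cap K_i = K_i$, hence $K_i\leqslant L$ for all $i$ and $L=K$. Note this only invokes (i) for two-element families, so the infinite-intersection difficulty does not touch part (ii).

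For (i), the Kurosh special case and the finite-$I$ iteration are both sound, and the reduction to ``an arbitrary intersection of free factors of the common group $K=\bigcap_i K_i$ is a free factor of $K$'' is the right move. The real gap is the infinite case: ``directed-limit/pullback-core argument on the Stallings graphs'' names a strategy, not a proof. There is a cleaner way to close it when $K$ is finitely generated (the setting in which the paper uses the proposition), replacing any graph-theoretic limit by a rank bound. For each finite $F\subseteq I$ put $H_F := \bigcap_{i\in F}(K\cap H_i)$; by the finite case $H_F\leqff K$, so $\rk(H_F)\leqslant\rk(K)<\infty$, and for free factors $A\leqff B\leqff K$ equality of ranks forces $A=B$. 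Choose a finite $F_0$ minimizing $\rk(H_{F_0})$; for any finite $F$ one has $H_{F\cup F_0}\leqff H_{F_0}$ (your Kurosh lemma again, or Proposition~\ref{compositionfreefactors}(ii$'$)) and $\rk(H_{F\cup F_0})\geqslant\rk(H_{F_0})$ by minimality, hence $H_{F\cup F_0}=H_{F_0}$ and so $H_{F_0}\leqslant H_F$. Therefore $\bigcap_{i\in I}H_i=\bigcap_F H_F=H_{F_0}\leqff K$: the directed system stabilizes at a finite stage and no limiting construction is required. With that patch your proof of (i) is complete; for infinitely generated $K$ a genuinely different argument would be needed, but that lies outside the scope in which the paper applies the result.
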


For later use, we collect some more properties of free and algebraic extensions, highlighting the duality of these two notions.

\begin{prop}[{\cite[Prop.~3.11]{MVW}}]\label{compositionfreefactors}
Let $H\leqslant M_i\leqslant K\leqslant F_A$, for $i=1,2$. Then,
\begin{enumerate}[(i)]
\item if $H\leqalg M_1 \leqalg K$, then $H\leqalg K$;
\item[(i$'$)] if $H\leqff M_1 \leqff K$, then $H\leqff K$;
\item  if $H\leqalg K$, then $M_1\leqalg K$, while $H\not \leqalg M_1$ in general;
\item[(ii$'$)] if $H\leqff K$, then $H\leqff M_1$, while $M_1\not\leqff K$ in general;
\item if $H\leqalg M_1$ and $H\leqalg M_2$, then $H\leqalg \langle M_1\cup M_2 \rangle$, while $H\not\leqalg M_1\cap M_2$ in general;
\item[(iii$'$)] if $H\leqff M_1$ and $H\leqff M_2$, then $H\leqff M_1\cap M_2$, while $H\not \leqff \langle M_{1} \cup M_{2} \rangle$ in general. \qed
\end{enumerate}
\end{prop}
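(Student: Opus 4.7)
I plan to split the proposition into its three dual pairs and handle the positive implications uniformly via Proposition~\ref{intersectionfreefactors} and the basic characterisations of the two extension types; the four non-implications are then exhibited by explicit small subgroups of $F_2$, verified with the Stallings graph machinery of Section~1.

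For the positive parts, (i') is obtained by concatenating basis extensions: a basis of $H$ extends to a basis of $M_1$, which extends further to a basis of $K$. For (ii'), Proposition~\ref{intersectionfreefactors}(i) applied to $H\leqff K$ together with the trivial $M_1\leqff M_1$ gives $H = H\cap M_1 \leqff K\cap M_1 = M_1$. For (iii'), the same item applied to $H\leqff M_1$ and $H\leqff M_2$ yields $H\leqff M_1\cap M_2$. On the algebraic side, (iii) is immediate from Proposition~\ref{intersectionfreefactors}(ii) applied with $H_1=H_2=H$ and $K_1=M_1$, $K_2=M_2$. For (ii), any $L$ with $M_1\leqslant L\leqff K$ satisfies $H\leqslant L$, so $H\leqalg K$ forces $L=K$, whence $M_1\leqalg K$. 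Finally, for (i), take any $L$ with $H\leqslant L\leqff K$; by (ii') just proved, $L\cap M_1\leqff M_1$, and since $H\leqslant L\cap M_1$ with $H\leqalg M_1$, algebraicity forces $L\cap M_1=M_1$, i.e.\ $M_1\leqslant L$; then $M_1\leqalg K$ combined with $M_1\leqslant L\leqff K$ forces $L=K$, yielding $H\leqalg K$.

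For the four non-implications I would work inside $F_2=\langle a,b\rangle$ and display the relevant Stallings graphs. A template for (ii') takes $H=\langle a\rangle \leqff F_2$ with $M_1=\langle a, b^2\rangle$, an intermediate subgroup of infinite index whose rank deficit obstructs its being a free factor of $F_2$. For (iii'), one picks two rank-2 intermediates $M_1,M_2$ each having $H$ as a free factor, chosen so that their join fills up strictly more of $F_2$ than any free factor containing $H$ does. For (ii) and (iii) the algebraic witnesses are built from finite-index extensions, which are known to be algebraic: for (ii) one inserts between $H$ and $K$ a subgroup $M_1$ in which $H$ happens to lie inside a proper free factor, and for (iii) one selects two algebraic overgroups $M_1,M_2$ of $H$ whose intersection admits a proper free factor containing $H$.

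The main obstacle is bookkeeping rather than conceptual: each negation must be checked on the nose, and for the algebraic clauses the cleanest verification is to compute the finite list $\AAEE(H)$ from the Stallings graph of $H$ via Theorem~\ref{alg cl}, and then read off that the relevant $M_1$ (respectively $M_1\cap M_2$) fails to belong to it. The duality structure itself is reassuring, since every positive algebraic implication has a free-factor partner proved from the same lemma, so once the two conjunction principles in Proposition~\ref{intersectionfreefactors} are granted the only non-routine ingredient is the transitivity step (i), which does require first invoking the already-established (ii').
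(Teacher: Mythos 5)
The paper does not prove this proposition: it is quoted verbatim from Miasnikov--Ventura--Weil~\cite[Prop.~3.11]{MVW} and ends with a \qed, so there is no in-text argument to compare against. What you have produced is therefore a reconstruction rather than an alternative route, and I will assess it on its own.

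Your treatment of the six positive implications is correct and uses exactly the tools the paper makes available. Items (i$'$), (ii$'$), (iii$'$) via basis extension and Proposition~\ref{intersectionfreefactors}(i), and item (iii) via Proposition~\ref{intersectionfreefactors}(ii), are clean. Item (ii) by contraposition through a proper free factor of $K$ is fine. For item (i), the structure of your argument is right, but one attribution is off: the step $L\cap M_1 \leqff M_1$ does \emph{not} follow from (ii$'$), which only tells you that a free factor of $K$ is a free factor of any intermediate subgroup \emph{containing} it; here $L\cap M_1$ need not contain $L$, and $M_1$ need not contain $L$. The step you want is the general fact that a free factor of $K$ intersected with any subgroup of $K$ is a free factor of that subgroup, which is Proposition~\ref{intersectionfreefactors}(i) applied to the pair $L\leqff K$, $M_1\leqff M_1$ (precisely the same move you already used for (ii$'$) itself). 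With that citation corrected, the transitivity proof is complete and standard.

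On the four non-implications you give plausible templates but no actual verifications, and a couple of your heuristic phrasings would need to be tightened before they count as proofs. For (ii$'$), $M_1=\langle a,b^2\rangle$ does work, but the reason is not a ``rank deficit'': it has rank~$2$, and the point is rather that the only rank-$2$ free factor of $F_2$ is $F_2$ itself, while $b\notin\langle a,b^2\rangle$. For (iii$'$) a concrete choice such as $H=\langle a^2\rangle$, $M_1=\langle a^2,b\rangle$, $M_2=\langle a^2,ab\rangle$ (so $\langle M_1\cup M_2\rangle=F_2$ and $a^2$ is not primitive) closes the gap. For (ii) and (iii) the paper itself already exhibits the needed witnesses elsewhere: the Parzanchevski--Puder pair $H=\langle a^2b^2\rangle\leqalg F_2$ with $M_1=\langle a^2b^2,ab\rangle$ (where $H\leqff M_1$) settles (ii), and the triple $H=\langle a^6b^6\rangle$, $M_1=\langle a^2,b^2\rangle$, $M_2=\langle a^3,b^3\rangle$ with $M_1\cap M_2=\langle a^6,b^6\rangle$ (where $a^6b^6$ is primitive) settles (iii); you should cite or reproduce these rather than gesture at ``finite-index extensions,'' since neither $M_1$ nor $M_2$ in that last example contains $H$ with finite index.
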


Let us insist on the computability part in Theorem~\ref{alg cl}. If we start with a finitely generated subgroup $H\leqfg F_A$ (given by a finite set of generators), we can effectively compute: (1) its Stallings graph $\Gamma_A(H)$; (2) all its (finitely many) quotients, resulting from identifying vertices in $\Gamma_A(H)$ in all possible ways followed by folding (i.e., we can effectively compute bases for all the members of the fringe $\OO_A(H)$); and (3) the cleaning process until getting the set of (bases of all the) algebraic extensions of $H$. This last step requires an algorithm deciding whether a given extension $H\leqslant K$ is free or not; this can be done using the classical Whitehead techniques (see Roig--Ventura--Weil~\cite{RVW} for a significant improvement on the time complexity, from exponential to polynomial), or using more modern algorithms based on Stallings automata (see Silva--Weil~\cite{SW}, and Puder~\cite{P}). Therefore, for $H\leqfg F_A$, the set $\AAEE(H)$ is finite and computable.

Another important observation is the following. The fringe of $H$ strongly depends on the ambient basis $A$ (reflected in the notation with the subscript $A$ in $\OO_A(H)$), while the set of algebraic extensions $\AAEE(H)$ does not, and is canonically associated to the subgroup $H$, since it is defined completely in algebraic terms. To illustrate this fact, see Example~2.5 from Miasnikov--Ventura--Weil~\cite{MVW}, where the fringe of $H=\langle ab, acba\rangle \leqslant F_A$, $A=\{a,b,c\}$, is computed: $\OO_A(H)=\{ H_0, H_1, H_2, H_3, H_4, H_5\}$, where $H_0=H$, $H_1=\langle ab, ac, ba \rangle$, $H_2=\langle ba, ba^{-1}, cb\rangle$, $H_3=\langle ab, ac, ab^{-1}, a^2 \rangle$, $H_4=\langle ab, aca, acba\rangle$, and $H_5=\langle a, b, c\rangle =F_A$; however, with respect to the new ambient basis $A'=\{ d, e, f\}$, where $d=a$, $e=ab$, and $f=acba$, the $A'$-automaton $\Gamma_{A'}(H)$ has a single vertex, and hence the $A'$-fringe of $H$ is much simpler, $\OO_{A'}(H)=\{H\}$. Of course, in this example, $H\leqff F_A$ and $\AAEE(H)=\{H\}$. Alternatively, thinking the change of basis as an automorphism of the ambient free group, we can express the above fact by saying that, for every $\varphi\in \aut(F_A)$, $\AAEE(H\varphi)=\{ K\varphi \mid K\in \AAEE(H)\}$ (this is to say that $H\leqalg K$ if and only if $H\varphi\leqalg K\varphi$), while $\OO_A(H)$ and $\OO_A(H\varphi)$ are unrelated in general (they may even have different cardinals). In the example above, considering the automorphism $\varphi\colon F_A\to F_A$, $a\mapsto a$, $b\mapsto ab$, $c\mapsto acba$, we have $H\varphi^{-1}=\langle b,c\rangle$, $\AAEE(H)=\{H\}$, $\AAEE(H\varphi^{-1})=\{H\varphi^{-1}\}$, $|\OO_A(H)|=6$, and $|\OO_A(H\varphi^{-1})|=1$.

We interpret the above fact by thinking that $\AAEE(H)$ is what really carries relevant algebraic information about the subgroup $H$ and its relative position within the lattice of subgroups of $F_A$. And $\OO_A(H)$ is the same set locally distorted with some accidental new members depending on the ambient basis used to draw and work with the graphs. From this point of view, Miasnikov--Ventura--Weil launched in~\cite{MVW} the following natural conjecture: the common subgroups in $\OO_{A'}(H)$, when $A'$ runs over every ambient basis might be, precisely, the algebraic extensions:

\begin{conj}[Miasnikov--Ventura--Weil, \cite{MVW}]\label{conjecturaMVW}
Let $A$ be a finite alphabet, and $F_A$ be the free group on $A$. Then, for every $H\leqfg F_A$,
 $$
\AAEE(H) =\bigcap_{A'\,\, \mbox{\scriptsize basis of } F_A} \OO_{A'}(H) =\bigcap_{\varphi\in \aut(F_A)} \big( \OO_A(H\varphi) \big)\varphi^{-1}.
 $$
\end{conj}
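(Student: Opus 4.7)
The plan is to prove the two inclusions separately.

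For $\AAEE(H) \subseteq \bigcap_\varphi (\OO_A(H\varphi))\varphi^{-1}$: fix $K \in \AAEE(H)$, so $H \leqalg K$. Since being an algebraic extension is defined purely in terms of free factors, it is preserved under ambient automorphisms, so $H\varphi \leqalg K\varphi$ for every $\varphi \in \aut(F_A)$. Applying Takahasi's theorem (specifically the refinement $\AAEE(\cdot) \subseteq \OO_A(\cdot)$ discussed after Theorem~\ref{alg cl}) to $H\varphi$ gives $K\varphi \in \OO_A(H\varphi)$, i.e., $K \in (\OO_A(H\varphi))\varphi^{-1}$. As this holds for every $\varphi$, $K$ lies in the intersection.

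For the reverse inclusion I would argue by contrapositive: given $K \notin \AAEE(H)$, produce some $\varphi$ (equivalently, a basis $A' = A\varphi$) such that $K\varphi \notin \OO_A(H\varphi)$. If $H \not\leqslant K$ any $\varphi$ works, since the elements of $\OO_A(H\varphi)$ are overgroups of $H\varphi$. Otherwise $H \leqslant K$ but $H \not\leqalg K$, and by Theorem~\ref{alg cl} the closure $L := \CL_K(H)$ satisfies $H \leqalg L \leqff K$ with $L \subsetneq K$. My attempt would be to pick a basis of $F_A$ adapted to this chain: start with a basis of $L$, extend it (using $L \leqff K$) to a basis of $K$, and then extend that to a basis $A'$ of $F_A$. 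With respect to such an $A'$, $\Gamma_{A'}(K)$ would be the one-vertex bouquet with $\rk(K)$ petals; $\Gamma_{A'}(L)$ would be the sub-bouquet of $\rk(L) < \rk(K)$ petals; and because $H \leqslant L$, the generators of $H$ involve only the $L$-letters of $A'$, so every edge of $\Gamma_{A'}(H)$ carries such a label and the image of $\theta_{H,K}\colon \Gamma_{A'}(H) \to \Gamma_{A'}(K)$ would land inside $\Gamma_{A'}(L) \subsetneq \Gamma_{A'}(K)$. This would witness $K \notin \OO_{A'}(H)$.

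The main obstacle is that this construction silently assumes $K \leqff F_A$, so that a basis of $K$ can actually be extended to a basis of the ambient free group. When $K$ is not a free factor of $F_A$ no such adapted basis exists, and one must select $A'$ by much more delicate means, using the internal combinatorics of $\Gamma_A(K)$ rather than merely writing $K$ in a ``basis-friendly'' form. Given the abstract's warning that the conjecture was ultimately refuted by Parzanchevski--Puder (in rank $2$) and Kolodner (in general rank), I expect this case to be genuinely obstructed, so that any attempted proof must either confine itself to free-factor configurations or must break down in a way that actually exhibits a non-algebraic $K$ sitting in $\bigcap_{A'}\OO_{A'}(H)$.
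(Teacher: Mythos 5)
You have correctly recognized the essential point: this statement is presented in the paper as a \emph{conjecture}, and the paper itself records that it is false --- refuted by Parzanchevski--Puder in rank $2$ and by Kolodner in general rank --- so the paper offers no proof of it. Your argument for the containment $\AAEE(H) \subseteq \bigcap_{\varphi} \big(\OO_A(H\varphi)\big)\varphi^{-1}$ is correct and is exactly the standard observation the paper alludes to: algebraicity is defined without reference to any basis, so $H\varphi \leqalg K\varphi$ for every $\varphi \in \aut(F_A)$, and $\AAEE(H\varphi) \subseteq \OO_A(H\varphi)$ follows from Takahasi.

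Your diagnosis of where the reverse inclusion fails is precisely right, and is worth emphasizing. The adapted-basis construction you describe requires $K \leqff F_A$, and in that free-factor regime the argument does succeed (it is the same mechanism as the (e)$\Rightarrow$(a) direction of the proposition in Section~2 of the paper, which uses a basis of $K$ itself rather than of $F_A$, thereby avoiding the free-factor hypothesis at the cost of proving only the tautological reformulation). But the Parzanchevski--Puder counterexample, $H = \langle a^2 b^2 \rangle \leqff K = \langle a^2 b^2, ab \rangle \leqslant F_{\{a,b\}}$, sits exactly in the obstructed regime: $K$ is a rank-$2$, infinite-index subgroup of $F_{\{a,b\}}$ (its abelianization has infinite index), hence not a free factor, so no adapted basis $A'$ exists, and indeed $K \in \OO_{A'}(H)$ for every ambient basis $A'$ despite $K \notin \AAEE(H)$. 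So the ``main obstacle'' you flagged is not a technical gap that more delicate combinatorics on $\Gamma_A(K)$ could bridge; it is a genuine obstruction, and your instinct that any attempted proof ``must break down in a way that actually exhibits a non-algebraic $K$ sitting in $\bigcap_{A'}\OO_{A'}(H)$'' is precisely what happened historically.
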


In~\cite{MVW} it was mentioned that this is clearly true in the two extremal situations $H\leqfi F_A$ and $H\leqff F_A$, but nothing else was known at that time. Seven years later, in 2014, the paper Parzanchevski--Puder~\cite{ppuder} appeared showing that the conjecture is not true as stated:

\begin{prop}[{Parzanchevski--Puder, \cite[Prop.~4.1]{ppuder}}]\label{pp}
Let $A=\{a,b\}$ and consider the free group $F_A$. The extension $H=\langle a^2b^2\rangle\leqslant \langle a^2b^2, ab\rangle =K\leqslant F_A$ is free, $H\leqff K$ (so, it is not algebraic), but it satisfies $K\in \OO_{A'}(H)$ for every ambient basis $A'$. \qed
\end{prop}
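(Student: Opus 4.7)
The statement has two parts, which I would attack separately.

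\emph{Part 1 (free extension).} The key observation is the relation $a^2b^2=(a^2ba^{-1})(ab)$. Setting $x=ab$ and $y=a^2ba^{-1}$, one has $K=\langle a^2b^2,ab\rangle=\langle yx,x\rangle=\langle x,y\rangle$; a direct check shows that $\{x,y\}$ is Nielsen reduced (no Nielsen move shortens the total length), hence is a basis of the rank-$2$ free subgroup $K$. The pair $\{yx,x\}=\{a^2b^2,ab\}$ is obtained from $\{x,y\}$ by the Nielsen move $y\mapsto yx$, so it is also a basis of $K$, which exhibits $H=\langle yx\rangle$ as a free factor of $K$.

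\emph{Part 2 (onto in every basis).} By the automorphism-theoretic reformulation recalled in the excerpt ($\OO_{A'}(H)=\OO_A(H\varphi)\varphi^{-1}$ when $A'=A\varphi$), it is equivalent to show that for every $\varphi\in\aut(F_A)$, $K\varphi\in\OO_A(H\varphi)$. Write $W=(a^2b^2)\varphi$ and $U=(ab)\varphi$, so $H\varphi=\langle W\rangle$ and $K\varphi=\langle W,U\rangle$; the goal becomes proving that the canonical $A$-homomorphism $\theta\colon\Gamma_A(H\varphi)\to\Gamma_A(K\varphi)$ is onto. Let $I$ be its image: a trim deterministic involutive subautomaton of $\Gamma_A(K\varphi)$ containing $\bp$ together with a closed path $P_W$ reading $W$. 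By Stallings' correspondence applied to subautomata, $I=\Gamma_A(K\varphi)$ is equivalent to $\LL(I)=K\varphi$; since $W\in\LL(I)$ is automatic and $K\varphi=\langle W,U\rangle$, it suffices to exhibit a closed path at $\bp$ in $I$ reading $U$.

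The crucial input is the identity $a^2b^2=a\cdot(ab)\cdot b$, which under $\varphi$ becomes $W=\varphi(a)\cdot U\cdot\varphi(b)$ in $F_A$. Reading $W$ along this factorization inside $I$: from $\bp$, the word $\varphi(a)$ reaches some vertex $p\in I$; then $U$ reaches some vertex $q\in I$; then $\varphi(b)$ returns to $\bp$. Determinism applied in reverse forces $p=q$ (both are the unique vertex reached from $\bp$ by reading $\varphi(b)^{-1}$), so the $U$-segment is a closed loop at $p$. Consequently, reading the word $\varphi(a)\cdot\varphi(b)=U$ at $\bp$ in $I$ traces the $\varphi(a)$-prefix of $P_W$ (to $p$) followed by the $\varphi(b)$-suffix (from $p=q$ back to $\bp$), yielding a closed $U$-path inside $I$; thus $U\in\LL(I)$, completing the argument.

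The main delicacy will be handling possible boundary cancellations: when $\varphi(a)$ or $\varphi(b)$ is not cyclically reduced, the reduced form of $W$ differs from the concatenation $\varphi(a)\cdot U\cdot\varphi(b)$, so $P_W$ may not decompose literally into three reduced segments and the edges used in reading $\varphi(a)$ from $\bp$ and $\varphi(b)$ from $p$ might a priori be canceled away from $P_W$. I would address this either by first reducing (via composition with suitable inner automorphisms) to the cyclically reduced case, or by a direct combinatorial verification that the cancellation lengths at the $\varphi(a)\mid U$ and $U\mid\varphi(b)$ boundaries are bounded by that at $\varphi(a)\mid\varphi(b)$, ensuring that every edge canceled at one boundary still reappears elsewhere in $P_W$.
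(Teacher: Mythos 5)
The paper does not prove this proposition itself; it is stated with a $\qed$ and cited directly to Parzanchevski--Puder, so there is no internal argument to compare against. Your proposal must therefore stand on its own, and while it contains the germ of a correct argument, there is a genuine gap precisely at the point you flag as ``the main delicacy.''

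Part~1 is fine: $\{a^2b^2,ab\}$ is a basis of $K$ (e.g.\ because $\Gamma_A(K)$ has rank~$2$ and a rank-$2$ free group cannot be generated by fewer than two elements, so any two generators form a basis), hence $\langle a^2b^2\rangle\leqff K$; your Nielsen-reduction detour is correct but not needed.

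For Part~2, the reduction to ``$\theta\colon\Gamma_A(H\varphi)\to\Gamma_A(K\varphi)$ is onto for every $\varphi\in\aut(F_A)$'' is right, and the reverse-determinism idea is the correct engine. But the step ``from $\bp$, the word $\varphi(a)$ reaches some vertex $p\in I$'' already fails in general: with no cancellation hypothesis, $\varphi(a)$ need not be a prefix of the reduced word $W$, and the path reading $\varphi(a)$ from $\bp$ may simply not exist in $\Gamma_A(K\varphi)$. Concretely, take $\varphi$ with $a\varphi=aba^{-1}$ and $b\varphi=a^2ba^{-1}$ (a basis, since $(a^2ba^{-1})(aba^{-1})^{-1}=a$). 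Then $W=ab^2ababa^{-1}$, $U=ababa^{-1}$, and $\Gamma_A(K\varphi)$ has vertices $\{\bp,r,p,q\}$ with edges $\bp\xrightarrow{a}r\xrightarrow{b}p\xrightarrow{b}\bp$, $p\xrightarrow{a}q\xrightarrow{b}r$. Reading $a\varphi=aba^{-1}$ from $\bp$ stalls after $ab$: the vertex $p$ has no incoming $a$-edge, so $p\xrightarrow{a^{-1}}{}$ does not exist. (The conclusion of the proposition still holds here --- one checks $\theta$ is onto --- but your path $P_W$-decomposition into literal $\varphi(a)\mid U\mid\varphi(b)$ segments does not.) The fix requires replacing $\varphi(a)$, $U$, $\varphi(b)$ by their \emph{surviving contributions} to the reduced word $W$, and one must then re-derive $p=q$ for those truncated segments; this is exactly where the content lies, and it is not routine. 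Your first proposed repair --- precomposing with inner automorphisms to reach the cyclically reduced case --- is also not innocent: conjugating $H\varphi$ and $K\varphi$ moves the basepoint of both Stallings graphs, and $\OO_A(H\varphi)$ is not conjugation-equivariant, so the reduction does not obviously preserve the statement. The second proposed repair (bounding the cancellation lengths) is not justified and, as the example shows, the cancellation at $\varphi(a)\mid\varphi(a)$ also enters, not only the two boundaries you name. Finally, a small logical gap even in the no-cancellation case: you assert that $p$ is ``the unique vertex reached from $\bp$ by reading $\varphi(b)^{-1}$'' without saying why; the reason is that $U=(ab)\varphi\in K\varphi$ gives a closed path $\bp\xrightarrow{\varphi(a)}x\xrightarrow{\varphi(b)}\bp$ in $\Gamma_A(K\varphi)$, and forward determinism forces $x=p$, whence $p\xrightarrow{\varphi(b)}\bp$ --- this should be stated explicitly before invoking reverse determinism.
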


They also proposed two possible reformulations making the conjecture more plausible. On one hand, the authors recognize that their counterexample exploits many idiosyncrasies of the (automorphism group of the) free group of rank two, and it could be that Miasnikov--Ventura--Weil conjecture holds true for ambient free groups of rank three or more (i.e., $\aut(F_2)$ is much ``smaller" and easier in structure than $\aut(F_r)$ for $r\geqslant 3$ and so, the intersection of fringes with respect to all ambient basis is ``too lax" in the case of rank two). On the other hand, they made the elementary but clever observation that the independence of $\AAEE(H)$ from the ambient basis (i.e., the reason for the obvious inclusion $\AAEE(H)\subseteq \bigcap_{A'} \OO_{A'}(H)$) has an even more restrictive consequence: adding new letters, extend $A$ to a bigger (possibly infinite) new alphabet $A\subseteq B$, and consider the free extension $F_A \leqff F_B$; viewed as subgroups of $F_B$, it is still true that $H\leqalg K$ if and only if $H\varphi\leqalg K\varphi$, for every $\varphi\in \aut(F_B)$. So, the same reasoning gives us the stronger inclusion $\AAEE(H)\subseteq \bigcap_{B'} \OO_{B'}(H)$, where $B'$ runs now over \emph{all} the ambient bases of \emph{all free extensions} $F_B$, $B\supseteq A$. Parzanchevski and Puder finished their paper~\cite{ppuder} by reformulating Conjecture~\ref{conjecturaMVW} into the following two variations:

\begin{conj}[{Parzanchevski--Puder, \cite{ppuder}}]\label{conj-a}
Let $A$ be a finite alphabet with $|A|\geqslant 3$, and $F_A$ be the free group on $A$. Then, for every $H\leqfg F_A$,
 $$
\AAEE(H) =\bigcap_{A'\,\, \mbox{\scriptsize basis of } F_A} \OO_{A'}(H) =\bigcap_{\varphi\in \aut(F_A)} \Big( \OO_A(H\varphi) \Big) \varphi^{-1}.
 $$
\end{conj}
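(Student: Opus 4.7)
The plan is to establish the equality $\AAEE(H) = \bigcap_{A'} \OO_{A'}(H)$ by proving both inclusions. The inclusion $\subseteq$ is the easy direction and is essentially already in hand: if $H \leqalg K$, then for any ambient basis $A'$ the Takahasi construction (proof of Theorem~\ref{Taka}) produces some $M \in \OO_{A'}(H)$ with $H \leqslant M \leqff K$; but algebraicity of $H \leqslant K$ rules out a proper such $M$, forcing $M = K$ and hence $K \in \OO_{A'}(H)$.

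For the reverse inclusion $\bigcap_{A'} \OO_{A'}(H) \subseteq \AAEE(H)$ I would argue by contrapositive: given $H \leqslant K \leqfg F_A$ with $H \not\leqalg K$, produce a single basis $A'$ of $F_A$ for which $K \notin \OO_{A'}(H)$. By Theorem~\ref{alg cl} we have the proper splitting $H \leqalg \CL_K(H) \leqff K$, giving a nontrivial free factor decomposition $K = \CL_K(H) * L$. The target is a basis $A'$ under which the canonical $A'$-homomorphism $\theta_{H,K} \colon \Gamma_{A'}(H) \to \Gamma_{A'}(K)$ fails to be surjective, because some primitive element of $L$ contributes an edge to $\Gamma_{A'}(K)$ outside the image of $\Gamma_{A'}(H)$.

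The heart of the attempt is to exploit the extra flexibility of $\aut(F_A)$ for $|A| \geqslant 3$: via Whitehead- or Nielsen-type transformations, look for $\varphi \in \aut(F_A)$ so that the new basis $A\varphi^{-1}$ contains a primitive element of $L$, and the corresponding edge of $\Gamma_{A\varphi^{-1}}(K)$ is manifestly absent from $\Gamma_{A\varphi^{-1}}(H)$. In rank two this collapses completely, as Proposition~\ref{pp} exhibits: the pair $(\langle a^2b^2\rangle, \langle a^2b^2, ab\rangle)$ is too rigid under $\aut(F_2)$ for any such disentanglement, because $ab$ cannot be extended to a basis of $F_2$ while $a^2b^2$ is kept short. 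Conjecture~\ref{conj-a} is the optimistic bet that this obstruction is a purely two-dimensional artifact, and that in rank $\geqslant 3$ the Whitehead machinery has enough room to always succeed.

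The main obstacle I would expect is precisely that this optimism may be unwarranted. Any concrete attempt to extend a basis of $L$ to a basis of $F_A$ compatibly with the algebraic coupling imposed by $H \leqslant \CL_K(H)$ demands fine control over how free factors of $K$ interact with free factors of the ambient group, and even for $|A| \geqslant 3$ one can anticipate cleverly built pairs $(H, K)$ resistant to every Whitehead move. As announced in the introduction, Kolodner's higher-rank construction does exactly this, so the plan should be expected to collapse on a well-chosen example, turning what starts as a proof attempt into a disproof and motivating the weakening to the \emph{onto} notion of extension explored in the subsequent sections.
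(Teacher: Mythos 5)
The statement you are attempting to prove is recorded in the paper as a \emph{conjecture} of Parzanchevski--Puder; the paper contains no proof of it, and in fact explicitly states that Kolodner's Theorem~\ref{kolodner} disproves it ``in all its mentioned forms.'' Concretely, the extension $H=\langle b^2aba^{-1}\rangle \leqff \langle b, aba^{-1}\rangle=K$ in $F_{\{a,b\}}$ is free (hence not algebraic), yet $K\varphi\in\OO_B(H\varphi)$ for every alphabet $B$ and every homomorphism $\varphi\colon F_{\{a,b\}}\to F_B$ sending $a,b$ to nontrivial elements. Taking $B=A$ with $|A|\geqslant 3$ and letting $\varphi$ range over the restrictions to $F_{\{a,b\}}$ of automorphisms of $F_A$ shows $K\in\bigcap_{A'}\OO_{A'}(H)$ while $K\notin\AAEE(H)$, so the displayed equality fails. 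There is therefore nothing in the paper to compare your argument against.

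That said, your proposal is a correct appraisal of the situation rather than a proof, and you say so yourself. Your sketch of the inclusion $\AAEE(H)\subseteq\bigcap_{A'}\OO_{A'}(H)$ is right: for $H\leqalg K$ and any basis $A'$, the Takahasi factorization through the image of $\theta_{H,K}$ produces $H\leqslant M\leqff K$ with $M\in\OO_{A'}(H)$, and algebraicity forces $M=K$. Your plan for the converse --- find a basis exposing the splitting $K=\CL_K(H)\ast L$ via Whitehead moves --- is exactly the heuristic that motivated the conjecture, and you correctly anticipate that it must collapse. One small imprecision worth flagging: you call Kolodner's example a ``higher-rank construction,'' but it lives entirely in $F_2$. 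What makes it lethal for rank $\geqslant 3$ is not its rank but its robustness: the onto property persists under \emph{every} nondegenerate homomorphism into any ambient free group (this is the ``fully onto'' notion of Section~3), so no amount of extra room in $\aut(F_A)$ for $|A|\geqslant 3$ can disentangle it, and both Conjecture~\ref{conj-a} and Conjecture~\ref{conj-b} fall simultaneously.
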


\begin{conj}[{Parzanchevski--Puder, \cite[Conj.~5.1]{ppuder}}]\label{conj-b}
Let $A$ be a finite alphabet and $F_A$ be the free group on $A$. Then, for every $H\leqfg F_A$,
 $$
\AAEE(H) =\bigcap_{B\supseteq A}\, \bigcap_{B'\,\, \mbox{\scriptsize basis of } F_B} \OO_{B'}(H) =\bigcap_{B\supseteq A} \, \bigcap_{\varphi\in \aut(F_B)} \Big( \OO_{B}(H\varphi) \Big) \varphi^{-1}.
 $$
\end{conj}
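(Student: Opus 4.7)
The plan is to tackle the two inclusions separately. The ``easy'' inclusion $\AAEE(H)\subseteq\bigcap_{B\supseteq A}\bigcap_{B'} \OO_{B'}(H)$ should follow verbatim from the corresponding inclusion in Conjecture~\ref{conjecturaMVW}, together with the essential observation made by Parzanchevski--Puder that algebraic extensions are preserved under free ambient extensions: if $H\leqalg K$ inside $F_A$ and $F_A\leqff F_B$, then $H\leqalg K$ also inside $F_B$ (because any free factor of $K$ in $F_B$ meets $F_A$ in a free factor of $K$ in $F_A$, via Proposition~\ref{intersectionfreefactors}(i)). Applying Takahasi's theorem inside each $F_B$ then yields $K\in\OO_{B'}(H)$ for every basis $B'$ of every $F_B$, as required.

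For the reverse inclusion, my plan is to argue contrapositively: starting from $H\leqfg K\leqslant F_A$ with $H\not\leqalg K$, we must exhibit some alphabet extension $A\subseteq B$ and some basis $B'$ of $F_B$ for which $\Gamma_{B'}(K)$ is \emph{not} a quotient of $\Gamma_{B'}(H)$. Since $H\not\leqalg K$, Theorem~\ref{alg cl} lets us split $K=\CL_K(H)\ast L'$ with $L'\neq 1$; fix a basis $y_1,\ldots,y_n$ of $L'$ and adjoin fresh letters $z_1,\ldots,z_n$, forming $B=A\cup\{z_1,\ldots,z_n\}$. The strategy is to choose a basis $B'$ of $F_B$ (for instance $A\cup\{y_1z_1,\ldots,y_nz_n\}$, or a Nielsen variant of it) designed so that, when expressed over $B'$, the subgroup $K$ is forced to use the $z_i$'s while $H\subseteq F_A$ avoids them. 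One would then hope for a quantitative comparison (edge count, core rank, degree sequence) forcing $\Gamma_{B'}(H)$ to be too ``small'' for any vertex identification followed by foldings to yield $\Gamma_{B'}(K)$.

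The decisive obstacle is precisely this last combinatorial step. Changing to a basis that mixes $A$ with the new letters simultaneously reshapes $\Gamma_{B'}(H)$, which may grow large and acquire branches permitting identifications that fold onto $\Gamma_{B'}(K)$; in particular, the naive ``$H$ has no $z_i$-edges, $K$ does'' count is invalid once we pass to $B'$. Proposition~\ref{pp} already shows, inside $F_2$ without any alphabet extension, that such folding-based rigidity can persist even when $H\leqff K$ strictly, and the entire point of Conjecture~\ref{conj-b} is that adjoining new letters should be enough to break this rigidity in all cases. I do not see a clean, basis-independent combinatorial invariant of $\Gamma_{B'}(H)$ that would witness non-algebraicity across \emph{every} ambient reparameterisation of \emph{every} free extension; this is where I would expect the argument to fail, and in light of Kolodner's counterexample announced in the introduction, it appears no such invariant can exist and the conjecture must be false as stated.
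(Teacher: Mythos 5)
You are being asked to prove something that is not a theorem: the statement labelled Conjecture~\ref{conj-b} is precisely that, a conjecture, and the paper does not (and could not) supply a proof. In fact the paper records, in Theorem~\ref{kolodner}, that Kolodner definitively disproved it: the extension $H=\langle b^2aba^{-1}\rangle \leqff K=\langle b, aba^{-1}\rangle$ in $F_{\{a,b\}}$ is a proper free factor (hence $K\notin\AAEE(H)$), yet $K\varphi\in\OO_B(H\varphi)$ for every alphabet $B$ and every homomorphism $\varphi\colon F_A\to F_B$ with $a\varphi,b\varphi\neq 1$; taking $\varphi$ to be an inclusion $F_A\hookrightarrow F_B$ composed with any automorphism of $F_B$ shows $K\in\bigcap_{B\supseteq A}\bigcap_{B'}\OO_{B'}(H)$, so the right-hand side is strictly larger than $\AAEE(H)$. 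Your final paragraph reaches exactly this conclusion, and your diagnosis of where the contrapositive argument breaks down --- that no basis-independent combinatorial invariant of $\Gamma_{B'}(H)$ witnesses non-algebraicity across all ambient free extensions --- is the right intuition for why the conjecture fails; this failure is what the paper turns into the definition of a \emph{fully onto} extension, $H\leqfont K$, which is strictly weaker than $H\leqalg K$.

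One small correction to your ``easy inclusion'' argument: the preservation of algebraicity under free ambient extensions is not something you need Proposition~\ref{intersectionfreefactors}(i) for. The notions of free factor and of algebraic extension are intrinsic to the pair $(H,K)$ --- they quantify over free decompositions of $K$ itself and never mention the ambient group --- so $H\leqalg K$ inside $F_A$ and $H\leqalg K$ inside $F_B$ are literally the same assertion. Your parenthetical about a free factor of $K$ in $F_B$ ``meeting $F_A$'' is vacuous, since any free factor of $K$ is already a subgroup of $K\leqslant F_A$. The correct and shorter justification is the one the paper gives: $\AAEE(H\varphi)=\{K\varphi\mid K\in\AAEE(H)\}$ for every $\varphi\in\aut(F_B)$, combined with Takahasi's theorem applied in $F_B$ to get $\AAEE(H)\subseteq\OO_{B'}(H)$ for every basis $B'$.
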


They also observed that their counterexample to Conjecture~\ref{conjecturaMVW} does not serve as a counterexample for either Conjecture~\ref{conj-a} or Conjecture~\ref{conj-b}. In fact, $H=\langle a^2b^2\rangle$ and $K = \langle a^2b^2, ab\rangle$ live inside the free group of rank two $F_A$, with $A=\{a,b\}$; but if we extend this ambient free group with a third letter, say $B=\{a,b,c\}$, then the new ambient basis $B'=\{x,y,z\}$, with $x=a$, $y=cb^{-1}$, $z=cbc^{-1}$, breaks the counterexample since $H=\langle x^2y^{-1}z^2y\rangle$, $K=\langle x^2y^{-1}z^2y, xy^{-1}zy\rangle$ and $K\not\in \OO_{B'}(H)$. We shall further exploit this example below.

The last step in this story is the recent 2020 preprint Kolodner~\cite{kolodner}, where the author definitively disproves the conjecture in all its mentioned forms. In fact, he shows the following stronger result:

\begin{thm}[{Kolodner~\cite[Thm.~1.4]{kolodner}}]\label{kolodner}
Let $A=\{a,b\}$. In $F_A$, the proper subgroup extension $H=\langle b^2aba^{-1}\rangle \leqff \langle b, aba^{-1}\rangle=K$ is free (and so, not algebraic) but, for an arbitrary alphabet $B$, and for every homomorphism $\varphi\colon F_A\to F_B$ with $a\varphi, b\varphi \neq 1$, $K\varphi \in \OO_{B}(H\varphi)$. \qed
\end{thm}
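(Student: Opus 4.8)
The plan is to separate the two claims. The free-factor assertion is immediate after a change of ambient basis: since $b$ and $aba^{-1}$ generate a free group of rank $2$ (its Stallings graph, computed below, has two vertices and three positive edges), we may rename $x=b$, $y=aba^{-1}$, and then $H=\langle x^{2}y\rangle$ while $\{x^{2}y,x\}$ is visibly a basis of $K$; hence $H\leqff K$, the extension is proper, and therefore $H\not\leqalg K$. The substantial part is the fringe-membership statement, which I would prove by a subdivision (``blow-up'') argument reducing everything to the single fact that, \emph{already in the basis $A$}, the graph $\Gamma_{A}(K)$ is a quotient of $\Gamma_{A}(H)$.

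First I would draw the two Stallings graphs over $A$. As $b^{2}aba^{-1}$ is reduced and cyclically reduced of length $5$, $\Gamma_{A}(H)$ is the $5$-cycle $\bp\xrightarrow{\,b\,}p_{1}\xrightarrow{\,b\,}p_{2}\xrightarrow{\,a\,}p_{3}\xrightarrow{\,b\,}p_{4}\xrightarrow{\,a^{-1}\,}\bp$; and folding the flower automaton of $\{b,\,aba^{-1}\}$ gives $\Gamma_{A}(K)$: two vertices $\bp, v$, a $b$-loop at each, and an $a$-edge $\bp\to v$ (so $\rk K=2$). Following labels from $\bp$, the unique $A$-homomorphism $\theta_{H,K}\colon\Gamma_{A}(H)\to\Gamma_{A}(K)$ is forced to send $\{\bp,p_{1},p_{2}\}\mapsto\bp$ and $\{p_{3},p_{4}\}\mapsto v$; the two initial $b$-edges go to the $b$-loop at $\bp$, the $a$-edges $p_{2}p_{3}$ and $\bp p_{4}$ to the $a$-edge $\bp\to v$, and $p_{3}p_{4}$ to the $b$-loop at $v$, so every vertex and edge of $\Gamma_{A}(K)$ is hit, i.e. $\theta_{H,K}$ is onto and $K\in\OO_{A}(H)$. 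The goal is to transport this surjection along an arbitrary $\varphi$.

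For this, for any Stallings $A$-automaton $\Gamma$ let $\Gamma_{\varphi}$ be the $B$-automaton obtained by replacing each $a_{i}$-labelled positive edge of $\Gamma$ by a reduced path spelling $a_{i}\varphi$; one checks $\LL(\Gamma_{\varphi})=\LL(\Gamma)\varphi$ (a reduced closed path in $\Gamma_{\varphi}$ cannot partially traverse a subdivided edge, so it projects to a closed path at $\bp$ in $\Gamma$, whose label lies in $\LL(\Gamma)$). Since $\theta_{H,K}$ preserves labels it induces an onto $B$-homomorphism $\theta_{\varphi}\colon\Gamma_{A}(H)_{\varphi}\twoheadrightarrow\Gamma_{A}(K)_{\varphi}$. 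Now fully fold both blow-ups to deterministic automata, $\Gamma_{A}(H)_{\varphi}\twoheadrightarrow\widehat{\Gamma}_{H}$ and $\Gamma_{A}(K)_{\varphi}\twoheadrightarrow\widehat{\Gamma}_{K}$; the composite $\Gamma_{A}(H)_{\varphi}\twoheadrightarrow\Gamma_{A}(K)_{\varphi}\twoheadrightarrow\widehat{\Gamma}_{K}$ has deterministic target, so by the universal property of folding it factors through $\widehat{\Gamma}_{H}$, giving an onto homomorphism $\psi\colon\widehat{\Gamma}_{H}\twoheadrightarrow\widehat{\Gamma}_{K}$. It remains to recognize the two folded automata: because $H$ is cyclic, $\Gamma_{A}(H)_{\varphi}$ is a subdivided circle reading $w:=(b\varphi)^{2}(a\varphi)(b\varphi)(a\varphi)^{-1}$, and folding a circle always yields a ``lollipop'' — a (possibly trivial) cycle with a tail attached at $\bp$ — which is automatically trim, so $\widehat{\Gamma}_{H}=\Gamma_{B}(\langle w\rangle)=\Gamma_{B}(H\varphi)$. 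Finally, $\psi$ is a homomorphism of deterministic automata out of a trim one, so (the label of a reduced closed path stays reduced, hence traces a reduced closed path in the deterministic target) its image lies in $c(\widehat{\Gamma}_{K})$; as $\psi$ is onto this forces $\widehat{\Gamma}_{K}=c(\widehat{\Gamma}_{K})=\Gamma_{B}(K\varphi)$. Hence $\psi\colon\Gamma_{B}(H\varphi)\twoheadrightarrow\Gamma_{B}(K\varphi)$, that is, $\Gamma_{B}(K\varphi)$ is a quotient of $\Gamma_{B}(H\varphi)$, i.e. $K\varphi\in\OO_{B}(H\varphi)$. (If $a\varphi=1$ or $b\varphi=1$ the blow-up of $\Gamma_{A}(H)$ is still a circle after collapsing the relevant edges, or degenerates to a point, and the same argument applies; the hypothesis is there only to avoid discussing such trivialities.)

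The main obstacle — and the reason this is a genuine argument rather than an instance of a general principle — is the last step: an onto homomorphism between the folded blow-ups of an arbitrary onto pair $\Gamma\twoheadrightarrow\Delta$ need not restrict to an onto homomorphism of their cores, since a core vertex upstairs can be swallowed by a hanging tree after folding. It is precisely the cyclicity of $H$ that rescues the argument: it makes $\Gamma_{A}(H)_{\varphi}$ a circle, hence $\widehat{\Gamma}_{H}$ equal to its own core, which is exactly what is needed to pull $\widehat{\Gamma}_{K}$ down to $\Gamma_{B}(K\varphi)$; this is also why the counterexample is built from a \emph{cyclic} $H$ together with an overgroup $K$ that is already a quotient of $\Gamma_{A}(H)$. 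The remaining points — that $\theta_{H,K}$ is genuinely onto, the identity $\LL(\Gamma_{\varphi})=\LL(\Gamma)\varphi$, and the universal property of folding — are routine.
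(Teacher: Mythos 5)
The paper does not actually prove this theorem; it is stated as a citation to Kolodner with a \qed, so there is no in-text proof to compare against. Evaluating your argument on its own: the overall strategy (compute $\theta_{H,K}\colon\Gamma_A(H)\twoheadrightarrow\Gamma_A(K)$ over the original basis, blow up both graphs by $\varphi$, transport the surjection, and fold) is indeed the right circle of ideas, and the preparatory steps are all correct: the identification $K=\langle x,y\rangle$, $H=\langle x^2y\rangle$ giving $H\leqff K$; the Stallings graphs over $A$ and the surjectivity of $\theta_{H,K}$ there; the identity $\LL(\Gamma_\varphi)=\LL(\Gamma)\varphi$ for the blow-up; and the universal property of folding producing an onto $\psi\colon\widehat\Gamma_H\twoheadrightarrow\widehat\Gamma_K$ between the fully folded graphs. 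You also correctly name the crux: making sure the surjection survives trimming, which Section~3 of the paper labels the \emph{critical} step~(4).

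The gap is in your proposed resolution. The claim ``folding a circle always yields a lollipop --- a cycle with a tail attached at $\bp$ --- which is automatically trim'' is false. Folding preserves rank one and connectivity, so $\widehat\Gamma_H$ is always a circle with trees attached, but these trees need not emanate from $\bp$. Concretely, take $B=\{x,y,z\}$, $b\varphi=xyx^{-1}$, $a\varphi=z$. Then $\Gamma_A(H)_\varphi$ is the $11$-cycle spelling $(xyx^{-1})(xyx^{-1})(z)(xyx^{-1})(z^{-1})$; the unique folding conflict is at the junction vertex $p_1$ between the first two copies of $b\varphi$ (its two outgoing edges are both labelled $x$), and performing that single fold yields a deterministic rank-$1$ graph consisting of the $9$-cycle spelling $xy^2x^{-1}zxyx^{-1}z^{-1}$ together with a pendant $x$-edge to $p_1$ hanging from an \emph{interior} vertex of that cycle. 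Hence $\widehat\Gamma_H$ has a degree-one vertex distinct from $\bp$, it is not trim, and $\widehat\Gamma_H\supsetneq c(\widehat\Gamma_H)=\Gamma_B(H\varphi)$. Consequently the onto-ness of $\psi$ does not immediately transfer to $\psi|_{c(\widehat\Gamma_H)}\colon\Gamma_B(H\varphi)\to c(\widehat\Gamma_K)$: the vertices or edges of $c(\widehat\Gamma_K)$ hit only by the pendant of $\widehat\Gamma_H$ might a priori be missed after trimming, which is exactly the phenomenon you describe as the obstacle a paragraph later. (In this particular example one can check by hand that the restriction is still onto, as Kolodner's theorem requires --- but your proof does not show it.)

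The structural reason your shortcut fails: cyclicity of $H$ guarantees that each \emph{interior} vertex of a blown-up edge has label-degree~$2$ (hence is safe under folding, as in the proof of Proposition~\ref{a}), but the \emph{junction} vertices --- the images of $\bp,p_1,\ldots,p_4$ in $\Gamma_A(H)_\varphi$ --- can have label-degree~$1$ whenever the adjacent words among $a\varphi,b\varphi,(a\varphi)^{-1},(b\varphi)^{-1}$ share an appropriate first letter, and those are exactly the vertices that can be pushed into a pendant by folding. A complete proof must argue directly that, even after trimming $\widehat\Gamma_H$, every vertex and edge of $\Gamma_B(K\varphi)$ is still covered; that analysis, specific to the pair $(H,K)$, is the genuine content of Kolodner's theorem and cannot be replaced by the general claim that a folded cycle is trim.
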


\section{Closing the MVW conjecture}

In this section, we want to elaborate more on Parzanchevski--Puder's idea about possible natural modifications of the original Miasnikov--Ventura--Weil conjecture, which could make it true.

Let $H\leqslant K\leqslant F_A$.

Firstly, we can look above $F_A$, not just through free extensions but using \emph{all} possible extensions. That is, consider a new free group $F_B$ and an arbitrary injective homomorphism $F_A\hookrightarrow F_B$, not necessarily with $A\subseteq B$, i.e., with the image not necessarily being a free factor of $F_B$. In this situation, $H\leqalg K$ still implies that, for every $\varphi\in \aut(F_B)$, $H\varphi\leqalg K\varphi \leqslant F_B$; or, in other words, $H\leqalg K$ implies that, for every basis $B'$ of $F_B$, the $B'$-homomorphism $\theta_{H,K}\colon \Gamma_{B'}(H)\twoheadrightarrow \Gamma_{B'}(K)$ is onto.

Secondly, we can look downwards instead of upwards: $H\leqalg K\leqslant F_A$ also implies that, for every subgroup $K\leqslant L\leqslant F_A$ and every automorphism $\varphi\in \aut(L)$, $H\varphi\leqalg K\varphi \leqslant L\leqslant F_A$; or, in other words, $H\leqalg K$ implies that, for every $K\leqslant L\leqslant F_A$ and every basis $C'$ of $L$, the $C'$-homomorphism $\theta_{H,K}\colon \Gamma_{C'}(H)\twoheadrightarrow \Gamma_{C'}(K)$ is onto. Note that, in general, there are plenty of automorphisms of $L$ which do not extend to automorphisms of $F_A$; furthermore, $L$ may very well be not finitely generated.

Thirdly, we can combine the two effects upwards/downwards: $H\leqalg K$ also implies that, for every free group inclusion $F_A \hookrightarrow F_B$, every subgroup $L\leqslant F_B$ containing $K$, and every automorphism $\varphi\in \aut(L)$, $H\varphi\leqalg K\varphi \leqslant L\leqslant F_B$ (note that this is more general than before since $L$ is now not necessarily a subgroup of $F_A$); equivalently, $H\leqalg K$ implies that, for every free group inclusion $F_A \hookrightarrow F_B$, every subgroup $L\leqslant F_B$ containing $K$, and every basis $C'$ of $L$, the $C'$-homomorphism $\theta_{H,K}\colon \Gamma_{C'}(H)\twoheadrightarrow \Gamma_{C'}(K)$ is still onto.

As we show in the following result, all these generalizations of Parzanchevski--Puder's idea are, in fact, (tautologically) equivalent to the algebraicity of the initial extension $H\leqslant K$.

\begin{prop}
Let $A$ be an alphabet, $F_A$ be the free group on $A$, and let $H\leqslant K\leqslant F_A$. Then the following are equivalent:
\begin{enumerate}[(a)]
\item $H\leqalg K$;
\item for every free group inclusion $F_A \hookrightarrow F_B$, every subgroup $L\leqslant F_B$ containing $K$, and every basis $C'$ of $L$, the $C'$-homomorphism $\theta_{H,K}\colon \Gamma_{C'}(H)\twoheadrightarrow \Gamma_{C'}(K)$ is onto;
\item for every $B\supseteq A$, every subgroup $L\leqslant F_B$ containing $K$, and every basis $C'$ of $L$, the $C'$-homomorphism $\theta_{H,K}\colon \Gamma_{C'}(H) \twoheadrightarrow \Gamma_{C'}(K)$ is onto;
\item for every subgroup $L\leqslant F_A$ containing $K$, and every basis $C'$ of $L$, the $C'$-homomorphism $\theta_{H,K}\colon \Gamma_{C'}(H)\twoheadrightarrow \Gamma_{C'}(K)$ is onto;
\item for every basis $C'$ of $K$, the $C'$-homomorphism $\theta_{H,K}\colon \Gamma_{C'}(H)\twoheadrightarrow \Gamma_{C'}(K)$ is onto.
\end{enumerate}
\end{prop}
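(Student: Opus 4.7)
The plan is to close the cycle $(a)\Rightarrow(b)\Rightarrow(c)\Rightarrow(d)\Rightarrow(e)\Rightarrow(a)$. The three middle implications are immediate, since each successive condition merely restricts the class of ambient inclusions $F_A\hookrightarrow F_B$, of overgroups $L$, or of bases $C'$ being quantified over. Thus all the real work lies in $(a)\Rightarrow(b)$ and $(e)\Rightarrow(a)$.

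For $(a)\Rightarrow(b)$, given any inclusion $F_A\hookrightarrow F_B$, any overgroup $K\leqslant L\leqslant F_B$, and any basis $C'$ of $L$, I would examine the image of $\theta_{H,K}\colon \Gamma_{C'}(H)\to\Gamma_{C'}(K)$. Since $\Gamma_{C'}(H)$ is connected and $\Gamma_{C'}(K)$ is deterministic, this image is a finite deterministic subautomaton of $\Gamma_{C'}(K)$; reading off its language exactly as in the sketch of Theorem~\ref{Taka} produces a subgroup $M=\LL(\operatorname{Im}(\theta_{H,K}))$ with $H\leqslant M\leqff K$. The key point is that the property $H\leqalg K$ is intrinsic to the pair $(H,K)$: it asserts that $H$ sits in no proper free factor of $K$, a condition referring only to the abstract lattice of free factors of~$K$ and not to any ambient overgroup. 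Hence hypothesis~(a) forces $M=K$, i.e., $\theta_{H,K}$ is onto.

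For $(e)\Rightarrow(a)$, I would argue by contrapositive. Assume $H\not\leqalg K$, so that $H$ lies inside some proper free factor $M$ of $K$. Picking a basis $B_M$ of $M$ and extending it --- by the very definition of free factor --- to a basis $C'=B_M\sqcup N$ of $K$ with $N\neq\emptyset$, one obtains a bouquet $\Gamma_{C'}(K)$ with one petal per element of $C'$, and a proper subbouquet $\Gamma_{C'}(M)$ with one petal per element of $B_M$. Since $H\leqslant M\leqslant K$, uniqueness of $C'$-homomorphisms into the deterministic graph $\Gamma_{C'}(K)$ forces $\theta_{H,K}$ to factor as $\Gamma_{C'}(H)\to\Gamma_{C'}(M)\hookrightarrow\Gamma_{C'}(K)$, so that its image misses every petal indexed by $N$. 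In particular $\theta_{H,K}$ is not onto, contradicting~(e).

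The main obstacle --- and really the only delicate point --- is articulating clearly that algebraicity is intrinsic to the pair $(H,K)$ and survives any embedding of $K$ into a (possibly infinitely generated) larger free group, so that the seemingly much stronger setup of~(b) is governed by the same property as~(a). Once this is granted, the Takahasi-style analysis of $\operatorname{Im}(\theta_{H,K})$ handles $(a)\Rightarrow(b)$ and the explicit choice of basis handles $(e)\Rightarrow(a)$, and the whole equivalence reduces to an unfolding of definitions.
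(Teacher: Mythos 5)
Your proof is correct and follows essentially the same approach as the paper's: the middle implications are immediate specializations, (a)$\Rightarrow$(b) uses the intrinsic nature of algebraicity together with the Takahasi-style observation that $\operatorname{Im}(\theta_{H,K})$ is a subautomaton of $\Gamma_{C'}(K)$ and hence yields a free factor, and (e)$\Rightarrow$(a) --- stated contrapositively where the paper argues directly --- picks a basis of $K$ adapted to a free decomposition $K=M\ast N$ with $H\leqslant M$, so $\Gamma_{C'}(H)$ misses the $N$-petals of the bouquet $\Gamma_{C'}(K)$. The only slip is calling $\operatorname{Im}(\theta_{H,K})$ \emph{finite}: $H$ and $K$ are not assumed finitely generated here, so this may fail, but your argument never actually uses finiteness.
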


\begin{proof}
The implications (a)$\Rightarrow$(b)$\Rightarrow$(c)$\Rightarrow$(d)$\Rightarrow$(e) are obvious.

For the relevant one, (e)$\Rightarrow$(a), consider a free decomposition $K=K_1*K_2$ with $H\leqslant K_1$; applying the hypothesis to a basis of $K$ of the form $C'=C'_1\sqcup C'_2$, where $C'_i$ is a basis of $K_i$, $i=1,2$, we have that $\theta_{H,K}\colon \Gamma_{C'}(H)\twoheadrightarrow \Gamma_{C'}(K)$ is onto. But, by construction, $\Gamma_{C'}(H)$ contains only edges labelled by letters from $C'_1$, while $\Gamma_{C'}(K)$ has a single vertex, and edges in bijection with $C'$. Therefore, $C'_2$ must be empty and $K_2=1$. This proves that $H\leqalg K$.
\end{proof}

\section{Onto extensions}

Interestingly, the counterexample given by Kolodner in Theorem~\ref{kolodner} opens up a possible new line of research considering and studying two new types of subgroup extensions within the lattice of subgroups of a free group (which do not coincide, in general, with algebraic extensions).

\begin{defi}
Let $A$ be an alphabet, and let $H\leqslant K\leqslant F_A$. We say that this is an \emph{onto} extension of subgroups, denoted $H\leqont K$, if $\theta_{H,K}\colon \Gamma_{A'}(H)\to \Gamma_{A'}(K)$ is onto, for every basis $A'$ of $F_A$; in other words, if $K\in \bigcap_{A'} \OO_{A'}(H)$, where $A'$ runs over all possible basis for $F_A$. Further, we say that $H\leqslant K$ is \emph{fully onto}, denoted $H\leqfont K$, if $\theta_{H,K}\colon \Gamma_{B'}(H)\to \Gamma_{B'}(K)$ is onto, for every basis $B'$ of every free extension $F_A\leqff F_B$, $B\supseteq A$. We denote by $\Omega(H)$ (resp., $f\Omega(H)$) the set of onto (resp., fully onto) extensions of $H$ within $F_A$.
\end{defi}

\begin{prop}
Let $H\leqslant K\leqslant F_A$. The following implications are true, while the reverse implications are not true, in general:
 $$
H\leqalg K \,\,\, \Rightarrow \,\,\, H\leqfont K \,\,\, \Rightarrow \,\,\, H\leqont K \,\,\, \Rightarrow \,\,\, K\in \OO_A(H).
 $$
That is, $\AAEE(H)\subseteq f\Omega(H)\subseteq \Omega(H)\subseteq \OO_{A}(H)$.
\end{prop}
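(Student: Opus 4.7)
The plan is to reduce the entire statement to the Takahasi-style observation already used in the proof sketch of Theorem~\ref{Taka}: for any extension $H\leqslant K$ and any ambient basis $C'$, the image $\operatorname{Im}(\theta_{H,K})$ sits inside $\Gamma_{C'}(K)$ as a trim deterministic subautomaton (trimness because each of its vertices lifts to a vertex of the trim $\Gamma_{C'}(H)$, and determinism inherited from $\Gamma_{C'}(K)$), so its language $L$ satisfies $H\leqslant L\leqff K$ thanks to the subautomaton-implies-free-factor principle quoted in Section~1.

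For the main implication $H\leqalg K\Rightarrow H\leqfont K$, I would fix an arbitrary free extension $F_A\leqff F_B$ and an arbitrary basis $B'$ of $F_B$, and apply the observation to $\theta_{H,K}\colon \Gamma_{B'}(H)\to \Gamma_{B'}(K)$, producing an intermediate subgroup $L$ with $H\leqslant L\leqff K$. The crucial point is that algebraicity is an intrinsic property of the pair $(H,K)$ (by definition $H$ is not contained in any proper free factor of $K$), so the hypothesis $H\leqalg K$ continues to hold when we view $K$ inside the larger $F_B$, and forces $L=K$; by Stallings' bijection, $\operatorname{Im}(\theta_{H,K})=\Gamma_{B'}(K)$, i.e.\ $\theta_{H,K}$ is onto. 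The remaining two implications are immediate from the definitions: $H\leqfont K\Rightarrow H\leqont K$ by specializing to $B=A$, and $H\leqont K\Rightarrow K\in \OO_A(H)$ by taking $A'=A$, in which case $\Gamma_A(K)$ is exhibited as a quotient of $\Gamma_A(H)$, precisely the condition for membership in the fringe.

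For strictness of the three inclusions I would invoke three examples already recalled in the paper. The MVW example at the end of Section~1, where $|\OO_A(H)|=6$ while $|\OO_{A'}(H)|=1$ for the alternative basis $A'$, forces $\Omega(H)\subseteq \OO_{A'}(H)=\{H\}$ and hence $\Omega(H)\subsetneq \OO_A(H)$. Proposition~\ref{pp} (Parzanchevski--Puder) gives $H=\langle a^2b^2\rangle \leqont \langle a^2b^2, ab\rangle$ in $F_{\{a,b\}}$, but the discussion following Conjecture~\ref{conj-b} shows that enlarging the alphabet to $\{a,b,c\}$ and switching to the basis $\{a,cb^{-1},cbc^{-1}\}$ spoils the onto property, witnessing $f\Omega(H)\subsetneq \Omega(H)$. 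Finally, Kolodner's Theorem~\ref{kolodner} directly exhibits a free (hence non-algebraic) extension that is fully onto, witnessing $\AAEE(H)\subsetneq f\Omega(H)$. The only point meriting any care along the way is verifying that $\operatorname{Im}(\theta_{H,K})$ is really a Stallings subgraph (trim and deterministic) so that the free-factor correspondence applies; once that is in hand, the intrinsic nature of algebraicity closes the argument with no further obstacle.
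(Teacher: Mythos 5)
Your proposal is correct and follows essentially the same approach as the paper: the forward implications are obtained from the Takahasi-type observation that $\operatorname{Im}(\theta_{H,K})$ is a trim deterministic subautomaton of $\Gamma_{C'}(K)$ containing the image of $\Gamma_{C'}(H)$, together with the intrinsic (basis-independent) nature of algebraicity, and the three strictness witnesses are exactly the Kolodner, Parzanchevski--Puder, and MVW Example~2.5 examples the paper invokes. The only cosmetic difference is in the third counterexample, where you pass through $\Omega(H)\subseteq\OO_{A'}(H)=\{H\}$ while the paper points directly at $H_1\in\OO_A(H)\setminus\Omega(H)$; these are equivalent observations.
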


\begin{proof}
We have already seen the three implications in the previous section. As for counterexamples, take $A=\{a,b\}$, and observe that Kolodner example $H=\langle b^2aba^{-1}\rangle \leqslant \langle b, aba^{-1}\rangle=K\leqslant F_A$ satisfies $H\leqfont K$ but $H\not\leqalg K$ (in fact, it is the extreme opposite, $H\leqff K$). Parzanchevski--Puder example $H=\langle a^2b^2\rangle\leqslant \langle a^2b^2, ab\rangle=K\leqslant F_A$ satisfies $H\leqont K$ but $H\not\leqfont K$. Finally, in Example~2.5 from Miasnikov--Ventura--Weil~\cite{MVW}, we have $H=\langle ab, acba\rangle$ and $H_1=\langle ab, ac, ba\rangle\in \OO_{A}(H)$, while $H\not\leqont H_1$ (see the discussion above).
\end{proof}

\begin{cor}
For any finitely generated subgroup $H\leqfg F_A$, we have $|\AAEE(H)|\leqslant |f\Omega(H)|\leqslant |\Omega(H)|\leqslant |\OO_A(H)|<\infty$.\qed
\end{cor}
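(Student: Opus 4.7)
The plan is essentially immediate from the preceding proposition combined with Takahasi's theorem. The preceding proposition establishes the chain of set inclusions
$$\AAEE(H) \subseteq f\Omega(H) \subseteq \Omega(H) \subseteq \OO_A(H),$$
and since set inclusion transports monotonically to cardinality, this already yields
$$|\AAEE(H)| \leqslant |f\Omega(H)| \leqslant |\Omega(H)| \leqslant |\OO_A(H)|.$$
So all that remains is to verify finiteness of the rightmost term. I would do this by invoking Theorem~\ref{Taka}: since $H\leqfg F_A$, its Stallings graph $\Gamma_A(H)$ is finite, the set of partitions of $V\Gamma_A(H)$ is therefore finite, and each partition produces (after folding) exactly one member of the fringe $\OO_A(H)$. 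Hence $|\OO_A(H)|<\infty$, closing the chain.

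There is no real obstacle here; the statement is a repackaging. The one point worth double-checking before writing anything down is that the definitions of $\Omega(H)$ and $f\Omega(H)$ collect subgroups of the \emph{ambient} $F_A$ (not of some larger $F_B$), so that the literal set inclusion $\Omega(H)\subseteq \OO_A(H)$ is well-posed; this is immediate from the definitions since the case $A'=A$ already forces any $K\in \Omega(H)$ to lie in $\OO_A(H)$. Once that is noted, the corollary is a one-line consequence of the previous proposition plus Takahasi.
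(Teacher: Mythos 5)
Your proposal is correct and follows exactly the paper's (implicit) argument: the cardinality chain is an immediate consequence of the set inclusions $\AAEE(H)\subseteq f\Omega(H)\subseteq \Omega(H)\subseteq \OO_A(H)$ established in the preceding proposition, and finiteness of $\OO_A(H)$ comes from the finiteness of $\Gamma_A(H)$ and hence of the set of vertex partitions, as in the sketch of Takahasi's theorem. The paper marks the corollary with $\qed$ precisely because it is this one-line repackaging.
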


Let us investigate now the properties of these two new types of extensions among free groups. To do this, we need to use an idea, which is not explicitly written in Kolodner~\cite{kolodner} but it is reminiscent in the arguments there.

Let $\Gamma_0$ and $\Delta_0$ be connected $A$-automata (neither necessarily deterministic, nor trim) and let $\theta_0\colon \Gamma_0\to \Delta_0$ be an $A$-homomorphism; let $H=\LL(\Gamma_0)$ and $K=\LL(\Delta_0)$. We want to fold and trim both $\Gamma_0$ and $\Delta_0$ until obtaining the Stallings graphs $\Gamma_A(H)$ and $\Gamma_A(K)$, respectively, but in a synchronized way so that $\theta_0$ keeps inducing $A$-homomorphisms down the tower of foldings, until $\theta_{H,K}\colon \Gamma_A(H)\to \Gamma_A(K)$; further, we shall pay attention to the preservation of surjectivity, whenever possible. Here is a way of doing this (used in Kolodner~\cite{kolodner} to analyze the relation between the automata homomorphisms $\theta_{H,K}\colon \Gamma_{A}(H)\to \Gamma_{A}(K)$ and $\theta_{H\varphi ,K\varphi }\colon \Gamma_{B}(H\varphi)\to \Gamma_{B}(K\varphi)$, for $\varphi \colon F(A)\to F(B)$):
\begin{enumerate}
\item[(0)] Start with $\theta_0\colon \Gamma_0\to \Delta_0$; let $H=\LL(\Gamma_0)$ and $K=\LL(\Delta_0)$.
\item[(1)] For every pair of edges, $e_1, e_2$, violating determinism in $\Gamma_0$, fold them in $\Gamma_0$ and simultaneously fold their images $e_1\theta_0$ and $e_2\theta_0$ in $\Delta_0$; there is the possibility that $e_1\theta_0$ and $e_2\theta_0$ are already equal in $\Delta_0$, in which case we do nothing on the right hand side. Observe that after this (or these) folding operation(s), the $A$-homomorphism $\theta_0$ determines naturally an $A$-homomorphism among the resulting $A$-automata. Repeat this process until having no more foldings to do at the left hand side, and denote the result by $\theta_1\colon \Gamma_1\to \Delta_1$. By construction, $\Gamma_1$ is deterministic. Note also that if $\theta_0$ is onto then $\theta_1$ is also onto.

\item[(2)] Now perform all possible foldings remaining to be done at the right hand side (and nothing on the left hand side): the homomorphism $\theta_1$ naturally transfers to the new situation, and we get $\theta_2\colon \Gamma_2\to \Delta_2$, where $\Gamma_2=\Gamma_1$, and now both $\Gamma_2$ and $\Delta_2$ are deterministic (and not trim in general). Note that, again, if $\theta_1$ is onto then $\theta_2$ is also onto.
\item[(3)] At this point, observe that the edges in the core of $\Gamma_2$ must map through $\theta_2$ to edges in the core of $\Delta_2$ (this is because both $\Gamma_2$ and $\Delta_2$ are deterministic and $\theta_2$ is an $A$-homomorphism). So, edges outside the core of $\Delta_2$ can only be images of edges from outside the core of $\Gamma_2$. Hence, trimming all the edges from outside the core of $\Delta_2$, and trimming simultaneously all their $\theta_2$-preimages in $\Gamma_2$, we obtain $\theta_3\colon \Gamma_3\to \Delta_3$, where $\Delta_3=c(\Delta_2)$ is deterministic and trim, and $\Gamma_3$ is deterministic (and not yet trim, in general). Again, $\theta_2$ onto implies $\theta_3$ onto. Moreover, observe also that $\LL(\Delta_3)=\LL(\Delta_2)= \LL(\Delta_1)=\LL(\Delta_0)=K$ and so, $\Delta_3=\Gamma_{A}(K)$.
\item[(4)] Finally, let us finish trimming $\Gamma_3$ (and do nothing on the right hand side) to obtain $\theta_4\colon \Gamma_4\to \Delta_4$, where $\Delta_4=\Delta_3= \Gamma_{A}(K)$, $\Gamma_4=\Gamma_{A}(H)$ since it is already a Stallings $A$-automaton with $\LL(\Gamma_4)=H$, $\theta_4=\theta_{H, K}$, and we are done. It is crucial to note, however, that in this \emph{critical} last step we may very well lose surjectivity: in fact, even with $\theta_3$ being onto, removing edges from $\Gamma_3$ may result into some edges from $\Delta_4=\Delta_3$ having no $\theta_4$-preimages in $\Gamma_4$.
\end{enumerate}

This synchronized folding process is crucial for the proof of the next proposition. Observe that Propositions~\ref{a} and~\ref{b} express the fact that onto and fully onto extensions satisfy the same properties we already know for algebraic extensions so, they behave very similarly to them (compare with Propositions~\ref{intersectionfreefactors} and~\ref{compositionfreefactors}).

\begin{prop}\label{a}
Let $H_i \leqslant K_i \leqslant F_A$ be a collection of subgroup extensions in $F_A$, $i \in I$. Then,
\begin{enumerate}[(i)]
\item $H_i \leqont K_i$, $\forall i\in I$ $\Rightarrow$ $\langle H_i, i\in I\rangle \leqont \langle K_i, i\in I\rangle$;
\item $H_i \leqfont K_i$, $\forall i\in I$ $\Rightarrow$ $\langle H_i, i\in I\rangle \leqfont \langle K_i, i\in I\rangle$.
\end{enumerate}
\end{prop}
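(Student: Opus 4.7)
The plan is to track surjectivity through the four-step synchronized folding process just described, starting from the natural bouquet presentation of $H=\langle H_i,\, i\in I\rangle$ and $K=\langle K_i,\, i\in I\rangle$. I will focus on (i); part (ii) follows by exactly the same argument, ranging instead over every basis $B'$ of every free extension $F_A\leqff F_B$.

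First, fix a basis $A'$ of $F_A$ and form the $A'$-automata $\Gamma_0 = \bigvee_{i\in I}\Gamma_{A'}(H_i)$ and $\Delta_0 = \bigvee_{i\in I}\Gamma_{A'}(K_i)$, obtained by wedging the corresponding Stallings graphs at their common basepoints; a standard argument gives $\LL(\Gamma_0)=H$ and $\LL(\Delta_0)=K$. By hypothesis, each $\theta_{H_i,K_i}\colon \Gamma_{A'}(H_i)\twoheadrightarrow \Gamma_{A'}(K_i)$ is onto, and they assemble into an onto $A'$-homomorphism $\theta_0\colon \Gamma_0\twoheadrightarrow \Delta_0$. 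Applying the synchronized folding process to $\theta_0$ produces precisely $\theta_{H,K}\colon \Gamma_{A'}(H)\to \Gamma_{A'}(K)$, which is what we want to prove is onto.

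Steps (1), (2) and (3) preserve surjectivity, as noted in the description of the process; the hard part will be to control step (4), in which $\Gamma_3$ is trimmed down to $c(\Gamma_3)=\Gamma_{A'}(H)$ and surjectivity is, a priori, in danger. To handle step (4), I would pick any edge (or vertex) $f$ of $\Gamma_{A'}(K)=\Delta_3$ and pull it back through steps (1)--(3) to some $e\in \Delta_0$ with image $f$. By the bouquet structure, $e$ lies in a single petal $\Gamma_{A'}(K_j)$, and the hypothesis $H_j\leqont K_j$ supplies a lift $\tilde e\in \Gamma_{A'}(H_j)\subseteq \Gamma_0$ with $\tilde e\,\theta_0=e$. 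Crucially, since the petal $\Gamma_{A'}(H_j)$ is itself trim, $\tilde e$ lies in some reduced closed path $\tilde p$ at its basepoint (hence at the basepoint of $\Gamma_0$), whose label $h\in H_j\subseteq H$ is a reduced word of $F_A$.

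The conclusion should then follow by tracking $\tilde p$ through the process: its images in $\Gamma_2$ and in $\Delta_2$ are closed paths at the basepoint with label $h$ and, since both targets are deterministic, both images are reduced. In particular, every edge of the image of $\tilde p$ in $\Delta_2$ lies in $c(\Delta_2)=\Delta_3$, so none of the edges of $\tilde p$'s image in $\Gamma_2$ is trimmed during step (3); hence $\tilde p$ persists as a reduced closed path at the basepoint of $\Gamma_3$, placing the image $\tilde f$ of $\tilde e$ in $c(\Gamma_3)=\Gamma_{A'}(H)$, with $\tilde f\,\theta_{H,K}=f$. This proves surjectivity and hence (i); part (ii) is the same argument with a basis $B'$ of a free extension $F_A\leqff F_B$ in place of $A'$. (For infinite $I$ one either performs the folding process transfinitely, or reduces to finite sub-indices via $H=\bigcup_{F\subseteq I\text{ finite}}\langle H_i,\, i\in F\rangle$ and similarly for $K$.)
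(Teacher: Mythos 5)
Your proof is correct, and it takes a genuinely different route from the paper's. The paper argues globally: it observes that every non-basepoint vertex of the wedges $\Gamma_0$ and $\Delta_0$ has \emph{label-degree} at least $2$ (being a vertex of some Stallings petal), that label-degree can only stay constant or increase under folding, and therefore that $\Gamma_2$ and $\Delta_2$ are already trim; steps (3) and (4) are thus empty and one gets $\Gamma_2=\Gamma_{A'}(H)$, $\Delta_2=\Gamma_{A'}(K)$, with $\theta_2$ still onto. Your argument is instead local: you make no structural claim about the intermediate graphs, but fix an arbitrary edge $f$ of $\Gamma_{A'}(K)$, pull it back to a single petal $\Gamma_{A'}(K_j)$, lift it via the hypothesis $H_j\leqont K_j$, embed that lift in a reduced closed path $\tilde p$ at $\bp$ inside the trim petal $\Gamma_{A'}(H_j)$, and push $\tilde p$ forward, noting that a path with reduced label in a deterministic automaton is itself reduced, so both images of $\tilde p$ lie in the cores and survive steps (3) and (4). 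Both proofs are sound. The paper's is slicker once one has the label-degree observation, and it yields the extra information that no trimming happens at all; yours is more robust in that it would still apply in a setting where some trimming genuinely occurred. Two small points to tidy up: the label $h$ of $\tilde p$ is a reduced word over the alphabet $A'$ (not ``of $F_A$''); and your one-line handling of infinite $I$ by passing to finite subfamilies does work, but it deserves a sentence: every edge of $\Gamma_{A'}(K)$ lies on a reduced closed $\bp$-loop reading some $k\in K_F$ for a finite $F\subseteq I$, so it is in the image of $\theta_{K_F,K}$, and surjectivity then transfers via the commuting square formed by $\theta_{H_F,K_F}$, $\theta_{H_F,H}$, $\theta_{K_F,K}$ and $\theta_{H,K}$.
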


\begin{proof}
(i). Fix a basis $A'$ of $F(A)$. The hypothesis tells us that, for $i\in I$, $\theta_{H_i,K_i}\colon \Gamma_{A'}(H_i)\twoheadrightarrow \Gamma_{A'}(K_i)$ is an onto $A'$-homomorphism. Glue together all the $\Gamma_{A'}(H_i)$ (resp., $\Gamma_{A'}(K_i)$), for $i\in I$, along their basepoints, to get $\Gamma_0$ (resp., $\Delta_0$) and the natural onto $A'$-homomorphism $\theta_0\colon \Gamma_0 \twoheadrightarrow \Delta_0$ induced by the $\theta_{H_i,K_i}$'s. Observe that neither $\Gamma_0$ nor $\Delta_0$ is deterministic, in general, and that $\LL(\Gamma_0)=H$ and $\LL(\Delta_0)=K$, where $H=\langle H_i, i\in I\rangle$ and $K=\langle K_i, i\in I\rangle$.

Now let us apply the synchronized folding process described above, starting with the onto $A'$-homomorphism $\theta_0\colon \Gamma_0\twoheadrightarrow \Delta_0$, until obtaining $\theta_{H,K}\colon \Gamma_{A'}(H)\to \Gamma_{A'}(K)$. We will deduce that this last $A'$-homomorphism is onto, after arguing that all the left hand side $A'$-automata $\Gamma_0$, $\Gamma_1$, $\Gamma_2$, along the process (as well as the right hand side ones) are trim and so, neither step (3) nor the critical step (4) take place.

By construction, $\theta_0\colon \Gamma_0\twoheadrightarrow \Delta_0$ is onto, and $\Gamma_0$ and $\Delta_0$ are both trim, and not necessarily deterministic. In general, along the individual folding processes applied to $\Gamma_0$ and $\Delta_0$ in steps (1) and (2), the fact of being trim can be lost, since some vertices along the process decrease their degrees and could eventually become degree one vertices. However, we claim that this will not be the case, neither for $\Gamma_0$, nor for $\Delta_0$. Observe that these two $A'$-automata are trim in a stronger way: for every vertex $p\neq \circledcirc$, not only the degree is bigger than one, $|\{e\in E \mid \iota e =p \}|>1$, but also its \emph{label-degree}, $|\{a\in (A')^{\pm} \mid \exists\, e\in E \mbox{ s.t. } \iota e=p,\, \operatorname{lab}(e)=a\}|>1$. And, meanwhile the degree of a vertex could decrease along the folding process, its label-degree stays constant or increases. Therefore, at the end of step (2), the label-degree of all vertices $p\neq \circledcirc$ in $\Gamma_2$ and $\Delta_2$ are bigger than one and hence, so are their degrees too. This means that steps (3) and (4) are empty and $\Gamma_2=\Gamma_3=\Gamma_4=\Gamma_{A'}(H)$, $\Delta_2=\Delta_3=\Delta_4=\Gamma_{A'}(K)$, and $\theta_2=\theta_3=\theta_4=\theta_{H,K}\colon \Gamma_{A'}(H)\twoheadrightarrow \Gamma_{A'}(K)$ is onto. Since this is valid for every initially fixed basis $A'$ of $F(A)$, we deduce that $H\leqont K$.

(ii). Fix a free extension $F_A\leqff F_B$, $B\supseteq A$, and a basis $B'$ for $F_B$. The hypothesis tells us that, each $B'$-homomorphism $\theta_{H_i,K_i}\colon \Gamma_{B'}(H_i)\twoheadrightarrow \Gamma_{B'}(K_i)$, for $i\in I$, is onto. Glue them together along their basepoints and apply the synchronized folding process described above. The exact same argument as in (i) tells us that $\theta_{H,K}\colon \Gamma_{B'}(H) \twoheadrightarrow \Gamma_{B'}(K)$ is onto, where $H=\langle H_i, i\in I\rangle$ and $K=\langle K_i, i\in I\rangle$. Since this is valid for every basis $B'$, we deduce that $H\leqfont K$.
\end{proof}

\begin{rem}
The argument in the proof of Proposition~\ref{a} is not technically correct for the case $|I|=\infty$, since the processes of folding edges, pair by pair, in steps~(1) and~(2) could very well be infinitely long. This is not a conceptual obstacle, but only a matter of expression: one should do \emph{all} these foldings in a single step (losing, of course, the algorithmic aspect of the proof, valid only when $|I|<\infty$). In a non-deterministic (possibly infinite) $A$-automata $\Gamma$, one can define the equivalence relation among vertices given by $p\sim q$ $\Leftrightarrow$ there is a path $\gamma$ in $\Gamma$ satisfying $\iota \gamma=p$, $\tau \gamma=q$, and $\lab(\gamma )=1\in F_A$. It is straightforward to see that $\Gamma/\sim$ is automatically deterministic and has the same language $\LL(\Gamma/\sim)=\LL(\Gamma)$; further, for the case when $\Gamma$ is finite, $\Gamma/\sim$ equals the final output of the sequence of foldings. Similarly, when $\Gamma$ is infinite, the trim process cannot be done edge by edge (there could even be no vertex of degree 1, and infinitely many edges to be trimmed out). Instead, one should delete, in a single step, all the edges not visited by any reduced closed path at $\circledcirc$.
\end{rem}

\begin{prop}\label{b}
Let $H\leqslant M_i\leqslant K\leqslant F_A$, for $i=1,2$. Then,
\begin{enumerate}[(i)]
\item if $H\leqont M_1 \leqont K$, then $H\leqont K$;
\item if $H\leqont K$, then $M_1\leqont K$, while $H\not \leqont M_1$ in general;
\item if $H\leqont M_1$ and $H\leqont M_2$, then $H\leqont \langle M_1\cup M_2 \rangle$, while $H\not\leqont M_1\cap M_2$ in general;
\item[(i$'$)] if $H\leqfont M_1 \leqfont K$, then $H\leqfont K$;
\item[(ii$'$)] if $H\leqfont K$, then $M_1\leqfont K$, while $H\not\leqfont M_1$ in general;
\item[(iii$'$)] if $H\leqfont M_1$ and $H\leqfont M_2$, then $H\leqfont \langle M_1\cup M_2 \rangle$, while $H\not \leqfont M_1\cap M_2$ in general.
\end{enumerate}
\end{prop}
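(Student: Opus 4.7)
The proof of Proposition~\ref{b} comprises six statements organized as three pairs, each consisting of an onto assertion and its fully onto analog; the primed and unprimed versions share the same structural argument, differing only in whether the ambient basis ranges over $F_A$ or over every free overextension $F_A \leqff F_B$. The common backbone of the positive implications is the elementary factorization
\[
\theta_{H,K} \;=\; \theta_{H,M_1}\,\theta_{M_1,K},
\]
valid for any chain $H \leqslant M_1 \leqslant K$ and any chosen basis, and forced by the uniqueness of $A'$-homomorphisms into connected deterministic $A'$-automata mentioned in Section~1.

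Accepting this factorization, the positive parts are immediate. For (i) and (i$'$), fix an ambient basis: both factors above are onto by hypothesis, so their composition is onto; since the basis was arbitrary, $H \leqont K$ (resp.~$H \leqfont K$). For (ii) and (ii$'$), the same factorization shows that surjectivity of $\theta_{H,K}$ forces surjectivity of $\theta_{M_1,K}$ (the image of a composition lies inside the image of its last factor), yielding $M_1 \leqont K$ (resp.~$M_1 \leqfont K$). For (iii) and (iii$'$), the positive assertion is a direct application of Proposition~\ref{a} to the two-element family $I=\{1,2\}$ with $H_1 = H_2 = H$ and $K_1 = M_1$, $K_2 = M_2$, which gives $H = \langle H, H\rangle \leqont \langle M_1 \cup M_2\rangle$ (resp.~$\leqfont$).

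The remaining task --- exhibiting counterexamples to the ``in general'' negative clauses of (ii), (ii$'$), (iii), (iii$'$) --- is where I expect the main difficulty. For the failure of $H \leqont M_1$ within $H \leqont K$ in (ii), a natural recipe is to reuse a known violation of the onto condition and embed it in a larger overgroup where the condition is restored: Example~2.5 of~\cite{MVW} already gives $H = \langle ab, acba\rangle$ with $H \not\leqont \langle ab, ac, ba\rangle$, and a finite-index overgroup $K$ containing the latter is a comfortable ambient choice, since the Schreier-covering nature of $\theta_{H,K}$ makes it onto in every basis. For the failure of $H \leqont M_1 \cap M_2$ in (iii), the construction should parallel the algebraic analog in Proposition~\ref{compositionfreefactors}(iii), finding two onto overgroups whose intersection collapses too far under some change of basis. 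Finally, promoting any such unprimed example to its fully onto version requires checking robustness under every embedding $F_A \leqff F_B$, which is precisely the type of refinement that Kolodner~\cite{kolodner} performs for the basic MVW counterexample and which constitutes the genuine point of delicacy in the proof.
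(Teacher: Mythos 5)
Your treatment of the positive implications is correct and coincides with the paper's: (i),(i$'$) and (ii),(ii$'$) follow from the factorization $\theta_{H,K}=\theta_{H,M_1}\theta_{M_1,K}$, and (iii),(iii$'$) from Proposition~\ref{a} applied to the two-element family. That is essentially the paper's argument.

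The gap is in the counterexamples, which you only sketch, and the sketch for (ii) rests on a false premise. You propose to reuse Example~2.5 of \cite{MVW}, where $H=\langle ab, acba\rangle\not\leqont M_1=\langle ab, ac, ba\rangle$, and to find a finite-index overgroup $K\geqslant M_1$ ``since the Schreier-covering nature of $\theta_{H,K}$ makes it onto in every basis.'' This is not so: the map $\theta_{H,K}\colon\Gamma_{A'}(H)\to\Gamma_{A'}(K)$ is a covering (hence onto) only when $H$ itself has finite index, but here $H$ is a rank-two free factor of $F_{\{a,b,c\}}$, so it has infinite index in any finite-index $K$, and the map can easily miss vertices of $\Gamma_{A'}(K)$. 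Indeed, in the basis $A'=\{d,e,f\}$ of that same example $\Gamma_{A'}(H)$ is a bouquet of two loops, which cannot surject onto the Schreier graph of any finite-index $K$. The paper avoids this by choosing $K=F_{\{a,b\}}$ and $H=\langle a^2b^2\rangle$, which is \emph{algebraic} in $F_{\{a,b\}}$ (neither a proper power nor primitive), so $H\leqfont K$ and $H\leqont K$ for free, while $H\not\leqont\langle a^2b^2,a^3b^3\rangle$ is checked directly in the standard basis. The key insight you are missing is that one should manufacture the onto overextension from an algebraic one, not from a finite-index one.

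For (iii),(iii$'$), you correctly identify that the counterexample should mimic the algebraic analog in Proposition~\ref{compositionfreefactors}(iii), but you do not produce one, and this is the genuinely delicate part of the proof. The paper takes $H=\langle a^6b^6\rangle$, $M_1=\langle a^2,b^2\rangle$, $M_2=\langle a^3,b^3\rangle$: both are algebraic over $H$ (since $a^6b^6=(a^2)^3(b^2)^3$ is neither primitive nor a proper power in $M_1$, and similarly in $M_2$), hence onto and fully onto, while $M_1\cap M_2=\langle a^6,b^6\rangle$ contains $a^6b^6$ as a \emph{primitive} element; exhibiting a basis of $F_{\{a,b\}}$ (resp.\ of $F_{\{a,b,c\}}$) for which the corresponding automaton homomorphism fails to be onto requires an explicit computation which the paper carries out and which is not routine. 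Your proposal leaves this entirely open.
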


\begin{proof}
(i)-(i'). Let $A'$ be a basis for $F_A$. Since, by hypothesis, both $A'$-homomor\-phisms $\theta_{H,M_1}\colon \Gamma_{A'}(H)\twoheadrightarrow \Gamma_{A'}(M_1)$ and $\theta_{M_1,K}\colon \Gamma_{A'}(M_1 )\twoheadrightarrow \Gamma_{A'}(K)$ are onto, we have $\theta_{H,K}=\theta_{H,M_1}\circ \theta_{M_1,K}\colon \Gamma_{A'}(H)\twoheadrightarrow \Gamma_{A'}(M_1 ) \twoheadrightarrow \Gamma_{A'}(K)$ is onto as well. Hence, $H\leqont K$. The proof of (i') is analogous.

(ii)-(ii'). Let $A'$ be a basis for $F_A$. Since, by hypothesis, the $A'$-homomor\-phism $\theta_{H,K}\colon \Gamma_{A'}(H)\twoheadrightarrow \Gamma_{A'}(K)$ is onto, and the inclusions $H\leqslant M_1\leqslant K$ tell us that $\theta_{H,K}=\theta_{H,M_1}\circ \theta_{M_1,K}$, we deduce that $\theta_{M_1, K}\colon \Gamma_{A'}(M_1 ) \twoheadrightarrow \Gamma_{A'}(K)$ is onto as well. Hence, $M_1\leqont K$. The proof of (ii') is analogous.

For a counterexample to the other assertion, consider $\langle a^2b^2\rangle \leqalg \langle a,b\rangle=F_{\{a,b\}}$, which is an algebraic extension because $a^2b^2$ is neither a proper power, nor a primitive element in $\langle a,b\rangle$; consequently, $\langle a^2b^2\rangle \leqfont \langle a,b\rangle$ and $\langle a^2b^2\rangle \leqont \langle a,b\rangle$ but, clearly, $\langle a^2b^2\rangle \not\leqont \langle a^2b^2, a^3b^3\rangle \leqslant \langle a,b\rangle$.

(iii)-(iii'). This is a direct application of Proposition~\ref{a}.

For a counterexample to the other assertion, consider $F_{\{a,b\}}$ and its subgroups $H=\langle a^6b^6\rangle$, $M_1=\langle a^2, b^2\rangle$, and $M_2=\langle a^3, b^3\rangle$. Since $a^6b^6=(a^2)^3(b^2)^3$ is neither a proper power nor a primitive element in $M_1$, we have $H\leqalg M_1$ and hence, $H\leqfont M_1$ and $H\leqont M_1$; similarly, $H\leqfont M_2$ and $H\leqont M_2$. However, we claim that $H\not\leqont M_1\cap M_2$. It is easier to show first that $H\not\leqfont M_1\cap M_2$, and then recycle the idea to strengthen the result to $H\not\leqont M_1\cap M_2$. In fact, $M_1\cap M_2 =\langle a^6, b^6\rangle$ (where, by the way, the element $a^6b^6$ is now primitive); adding a third ambient letter $\{a,b\}\subseteq \{a,b,c\}$, and using the ambient basis $B'=\{x,y,z\}$ where $x=ac^{-1}$, $y=cb$, $z=c$, it is straightforward to see that $\theta_{H, M_1\cap M_2}\colon \Gamma_{B'}(H)\to \Gamma_{B'}(M_1\cap M_2)$ is not onto: the idea here is that, in this new basis, $a=xz$, $b=z^{-1}y$ and so, in the middle point of the petal $a^6b^6=(xz)^5(x\not{z})(\not{z^{-1}}y)(z^{-1}y)^5$, an edge labelled $z$ must be trimmed out provoking the lost of surjectivity. A bit trickier to find, but following the same idea, one can see that, in the ambient basis $A'=\{x,y\}$ with $x=ab^{-1}a^{-1}$ and $y=ab^2$, we have $a=x^2y$, $b=y^{-1}x^{-1}y$, and the $A'$-homomorphism $\theta_{H, M_1\cap M_2}\colon \Gamma_{A'}(H)\to \Gamma_{A'}(M_1\cap M_2)$ is not onto. Therefore, $H\not\leqont M_1\cap M_2$ as claimed.
\end{proof}

The first examples of a fully onto extension not being algebraic (given by Kolodner in
Theorem~\ref{kolodner}, namely $\langle b^2aba^{-1}\rangle \leqfont \langle b, aba^{-1}\rangle\leqslant F_{\{a,b\}}$) or of an onto extension not being fully onto (given by Parzanchevski--Puder in Proposition~\ref{pp}, namely $\langle a^2b^2\rangle\leqont \langle a^2b^2, ab\rangle \leqslant F_{\{a,b\}}$) were hard to establish. However, using the above properties, and combining with known examples of algebraic extensions, we can easily construct lots of new examples of onto and fully onto extensions.

\begin{exa}
Take two copies of Kolodner's example, $\langle b^2aba^{-1}\rangle \leqfont \langle b, aba^{-1}\rangle\leqslant F_{\{a,b\}}$ and $\langle c^2aca^{-1}\rangle \leqfont \langle c, aca^{-1}\rangle\leqslant F_{\{a,c\}}$; clearly, both are also examples of fully onto extensions of subgroups of $F_{\{a,b,c\}}$ so, by Proposition~\ref{a}(ii), we obtain $\langle b^2aba^{-1}, c^2aca^{-1}\rangle \leqfont \langle b, c, aba^{-1}, aca^{-1}\rangle\leqslant F_{\{a,b,c\}}$, while it is a free extension so it is not algebraic. However, we have to be careful with a similar attempt to use Proposition~\ref{a}(i): $\langle a^2b^2\rangle\leqont \langle a^2b^2, ab\rangle$ as subgroups of $F_{\{a,b\}}$, and $\langle a^2c^2\rangle\leqont \langle a^2c^2, ac\rangle$ as subgroups of $F_{\{a,c\}}$, but we cannot conclude that $\langle a^2b^2, a^2c^2\rangle\leqont \langle a^2b^2, ab, a^2c^2, ac\rangle$ as subgroups of $F_{\{a,b,c\}}$ since, in the ambient basis $A'=\{x,y,z\}$, with $x=a$, $y=cb^{-1}$, $z=cbc^{-1}$, the corresponding $A'$-homomorphism is not onto (the problem being that \emph{neither} of the two initial extensions is onto when viewed as subgroups of $F_{\{a,b,c\}}$).

As a second example, consider $\langle a^2b^2\rangle \leqont \langle a^2b^2, ab\rangle$ and $\langle a^{-2}b^{-2}a^2b^2\rangle\leqont \langle a^2, b^2\rangle$, both as subgroups of $F_{\{a,b\}}$ (the first one is Parzanchevski--Puder example, and the second one is algebraic). Then, by Proposition~\ref{a}(i), $\langle a^2b^2, b^2a^2\rangle\leqont \langle ab, a^2, b^2\rangle$, while $\langle a^2b^2, b^2a^2\rangle\not\leqfont \langle ab, a^2, b^2\rangle$ since, extending with a third letter $c$ and using the ambient basis $B'=\{x,y,z\}$, with $x=a$, $y=cb^{-1}$, $z=cbc^{-1}$, the corresponding $B'$-homomorphism is not onto.
\end{exa}

\begin{cor}
For $H\leqfg F_A$, the finite sets $\AAEE(H)$, $f\Omega(H)$, and $\Omega(H)$, partially ordered by natural inclusion, form three lattices with the join operations given by $H_1 \vee H_2 =\langle H_1 \cup H_2\rangle$, and the meet operations given by
 $$
H_1 \wedge_{\AAEE} H_2 =\left\langle K \in \AAEE(H) \mid K\leqslant H_1 \cap H_2 \right\rangle,
 $$
 $$
H_1 \wedge_{f\Omega} H_2 =\left\langle K \in f\Omega(H) \mid K\leqslant H_1 \cap H_2 \right\rangle,
 $$
 $$
H_1 \wedge_{\Omega} H_2 =\left\langle K \in \Omega(H) \mid K\leqslant H_1 \cap H_2 \right\rangle,
 $$
respectively. Moreover, these lattices are not semimodular, in general (and so, not distributive either).
\end{cor}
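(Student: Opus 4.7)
The plan is twofold: first verify the lattice structure, and then produce an explicit witness against semimodularity (which already implies the failure of distributivity). Closure under joins is immediate from the propositions we have: if $H_1,H_2\in\AAEE(H)$, then $H\leqalg H_1$ and $H\leqalg H_2$, so Proposition~\ref{intersectionfreefactors}(ii) applied to the two-term family gives $H\leqalg \langle H_1\cup H_2\rangle$, i.e., $\langle H_1\cup H_2\rangle\in\AAEE(H)$. The same argument works for $\Omega(H)$ and $f\Omega(H)$ using Proposition~\ref{a}(i) and~(ii) respectively. Meets are then automatic: for $\mathcal{L}\in\{\AAEE(H),f\Omega(H),\Omega(H)\}$ the finite subset $\{K\in\mathcal{L}\mid K\leqslant H_1\cap H_2\}$ is non-empty (it contains $H$) and closed under the just-established join, so its overall join is its maximum, still lies in~$\mathcal{L}$, is contained in $H_1\cap H_2$, and is the largest such element of $\mathcal{L}$; this is exactly the formula for $H_1\wedge H_2$ written in the statement, and the lattice axioms follow routinely.

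For non-semimodularity, the task is to exhibit a single $H\leqfg F_A$ whose lattice contains elements $x,y$ with $x\wedge y\prec x$ but $y\not\prec x\vee y$. The strategy is to choose $H$ of small rank in $F_2$ or $F_3$, compute the fringe $\OO_A(H)$ by enumerating partitions of $V\Gamma_A(H)$ followed by folding (as in the sketch of Theorem~\ref{Taka}), apply the Miasnikov--Ventura--Weil cleaning process to extract $\AAEE(H)$, and read the cover relations directly off the resulting finite Hasse diagram. The cleanest situation is an $H$ for which $\AAEE(H)=f\Omega(H)=\Omega(H)$ as sets, so that a single witness settles all three statements simultaneously; the final parenthetical clause then follows because distributivity implies semimodularity in any lattice.

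The genuine obstacle is combinatorial rather than conceptual: one must find an $H$ whose algebraic-extensions lattice has at least five elements arranged so that a pentagon $N_5$ sub-configuration (the smallest non-semimodular lattice) survives with its covers in the ambient lattice. Natural candidates arise from subgroups built out of the archetypal algebraic extensions appearing in Section~3 (such as the extensions $\langle a^nb^n\rangle\leqalg F_2$, or hybrid constructions in the spirit of the examples after Proposition~\ref{b}), since these are precisely the ingredients whose interplay with nearby free-factor overgroups tends to introduce the asymmetric covers that semimodularity forbids.
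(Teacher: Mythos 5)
Your verification of the lattice structure is correct and essentially identical to the paper's: closure under join follows from Propositions~\ref{intersectionfreefactors}(ii), \ref{a}(i), and \ref{a}(ii), and the meet is then the maximum of the finite, nonempty, join-closed subset of members contained in $H_1\cap H_2$. That part is fine.

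The non-semimodularity claim, however, is not actually proved. What you give is a search \emph{strategy} --- compute the fringe, clean it down to $\AAEE(H)$, look for a pentagon --- but you never exhibit a concrete $H$. An existence statement of the form ``such lattices are not semimodular in general'' requires a witness, and the combinatorial work you defer as ``the genuine obstacle'' is exactly the content the corollary asks for. The paper supplies it: $H=\langle a^2,\, b^2,\, ab^2a\rangle\leqslant F_{\{a,b\}}$, whose fringe $\OO_A(H)=\{H_0,\dots,H_6\}$ is computed explicitly, all seven members are checked to be algebraic (so $\AAEE(H)=f\Omega(H)=\Omega(H)=\OO_A(H)$, settling all three lattices at once, as you hoped), and the cover relations give $H_3\succ H_0=H_2\wedge H_3$ while $H_6=H_2\vee H_3$ does \emph{not} cover $H_2$ because $H_2<H_5<H_6$.

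A smaller point worth flagging: the candidate families you suggest (e.g.\ $\langle a^nb^n\rangle\leqalg F_2$) cannot work, since their algebraic-extension posets are too small --- the paper itself observes $\AAEE(\langle a^2b^2\rangle)=\{H,F_2\}$, a two-element chain. Finding an $H$ with a rich enough $\AAEE(H)$ to embed a non-semimodular configuration (with covers preserved) is precisely the nontrivial step, and your proposal leaves it undone.
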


\begin{proof}
By Propositions~\ref{compositionfreefactors}(iii), \ref{b}(iii'), and ~\ref{b}(iii), $H\leqslant \langle H_1 \cup H_2\rangle$ is algebraic (resp., fully onto, onto) whenever $H\leqslant H_1$ and $H\leqslant H_2$ are so; and clearly it is the smallest such extension containing both $H_1$ and $H_2$, hence it is the join in the three lattices, $H_1\vee_{\AAEE} H_2=H_1\vee_{f\Omega} H_2=H_1\vee_{\Omega} H_2=\langle H_1 \cup H_2\rangle$. By a similar reason, the subgroup generated by all the $K\in \AAEE(H)$ (resp., $K\in f\Omega(H)$, $K\in \Omega(H)$) contained in $H_1\cap H_2$ (among which we always have $H$ itself) is algebraic (resp., fully onto, onto) over $H$, and it is clearly the biggest such extension contained in $H_1\cap H_2$, hence it is the meet $H_1\wedge_{\AAEE} H_2$ (resp., $H_1\wedge_{f\Omega} H_2$, $H_1\wedge_{\Omega} H_2$).

A lattice is %\emph{distributive} when, for every three elements $H_1, H_2, H_3$, the equality $H_1\wedge (H_2 \vee H_3)=(H_1\wedge H_2)\vee (H_1\wedge H_3)$ holds. And it is
\emph{semimodular} when $H_1 \wedge H_2 <:H_1$ implies $H_2<: H_1\vee H_2$ (here, $H_1<:H_2$ means that $H_2$ \emph{covers} $H_1$, i.e., $H_1<H_2$ but there is no $H_3$ in between $H_1<H_3<H_2$). This property is not true, in general, for $\AAEE(H)$, neither $\Omega(H)$, nor $f\Omega(H)$, as the example computed in~\cite[Ex.~3.7]{tfm} shows. Consider $H=\langle a^2, b^2, ab^2a \rangle\leqslant F_A$, where $A=\{a,b\}$. It is straightforward to see that $\OO_A(H)=\{H_0, H_1, H_2, H_3, H_4, H_5, H_6 \}$, where $H_0=H$, $H_1=\langle a^2, b, a^{-1}b^2a\rangle$, $H_2=\langle a^2, b^2, a^{-1}ba\rangle$, $H_3=\langle a^2, b^2, ab\rangle$, $H_4=\langle a, b^2\rangle$, $H_5=\langle a^2, b, a^{-1}ba\rangle$, and $H_6=\langle a,b\rangle$. It is also straightforward to see that all these extensions are algebraic over $H$ so, $\AAEE(H)=f\Omega(H)=\Omega(H)=\OO_A(H)$. Since all the inclusions among these subgroups follow by transitivity from $H_0\leqslant H_1, H_2, H_3, H_4$,\,\, $H_1, H_2\leqslant H_5$, and $H_5, H_3, H_4\leqslant H_6$, we have that %$H_1\wedge (H_2\vee H_3)=H_1\wedge H_6=H_1$, while $(H_1\wedge H_2)\vee (H_1\wedge H_3)=H_0\vee H_0=H_0$. Also,
$H_3$ covers $H_2 \wedge H_3 = H_0$, but $H_2 \vee H_3 = H_6$ does not cover $H_2$ (since $H_2<H_5<H_6$). Hence, neither of the three lattices $\AAEE(H)$, $f\Omega(H)$, and $\Omega(H)$ (here coinciding) is semimodular, in general. Since distributivity implies semimodularity, they are not distributive either.
\end{proof}

\section{The onto and fully onto closures and triviality of into extensions}

Similarly to what is done for algebraic extensions, given an extension $H\leqslant K\leqslant F_A$, one can define the onto and the fully onto closures of $H$ relative to $K$.

\begin{defi}
Let $H\leqslant K\leqslant F_A$. The \emph{$K$-onto closure} of $H$, denoted $\OCL_K(H)$, is the join of all the onto extensions of $H$ contained in $K$, i.e., the biggest onto extension of $H$ inside $K$. Similarly, the \emph{$K$-fully onto closure} of $H$, denoted $\FOCL_K(H)$, is the join of all the fully onto extensions of $H$ contained in $K$, i.e., the biggest fully onto extension of $H$ inside $K$. Clearly, by construction, we have the inclusions $H\leqslant \CL_K(H)\leqslant \FOCL_K(H)\leqslant \OCL_K(H)\leqslant K$.
\end{defi}

\begin{rem}
Since, in general, the notions of algebraic, onto, and fully onto extensions do not coincide, the corresponding closure operators will also be different. In fact, we can illustrate this fact by recycling Kolodner's and Parzanchevski--Puder's examples above. Let $A=\{a,b\}$. We know that $H=\langle b^2aba^{-1}\rangle$ and $K=\langle b, aba^{-1}\rangle$ satisfy $H\leqfont K\leqslant F_A$, while $H\leqff K$. Hence, $\CL_K(H)=H$, $\FOCL_K(H)=K$, and $\OCL_K(H)=K$. On the other hand, we also know that $H=\langle a^2b^2\rangle$ and $K=\langle a^2b^2, ab\rangle$ satisfy $H\leqont K\leqslant F_A$ but $H\not\leqfont K$. It is straightforward to compute $\OO_A(H)=\{H_0, H_1, H_2, H_3, H_4, H_5, H_6\}$, where $H_0=H$, $H_1=\langle a, b^2\rangle$, $H_2=\langle a^2, b\rangle$, $H_3=\langle a^2, b^2\rangle$, $H_4=\langle ab, a^2b^2\rangle$, $H_5=\langle a^2, b^2, ab\rangle$, and $H_6=\langle a,b\rangle=F_A$. Furthermore, $\AAEE(H)=\{H_0, H_6\}$ so, $\CL_K(H)=H$. But, additionally, the only subgroups in $\OO_A(H)$ between $H$ and $K$ are $H_0=H$ and $H_4=K$ so, $\FOCL_K(H)=H$ and $\OCL_K(H)=K$.
\end{rem}

\begin{prop}
Let $H\leqfg K\leqslant F_A$ be an extension of subgroups. Then, we have
 %$$
%H\leqalg \CL_{K}(H)\leqfont \FOCL_{K}(H) \leqont \OCL_{K}(H)\leqff K.
% $$
% $$
%H\leqalg \CL_{K}(H)\leqff \FOCL_{K}(H) \leqff \OCL_{K}(H)\leqff K.
% $$
 $$
H\leqalg \CL_{K}(H) \!\begin{array}{l} \leqslant_{_{\textsf{f.ont}}} \\ \leqslant_{_{\textsf{ff}}} \end{array}\!\! \FOCL_{K}(H) \!\begin{array}{l} \leqslant_{_{\textsf{ont}}} \\ \leqslant_{_{\textsf{ff}}} \end{array}\!\! \OCL_{K}(H)\leqff K.
 $$
\end{prop}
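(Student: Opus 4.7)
The plan is to first establish the two ``free factor'' links in the chain, namely $\OCL_K(H) \leqff K$ and $\FOCL_K(H) \leqff \OCL_K(H)$; the remaining relations then fall out as direct consequences of Propositions~\ref{compositionfreefactors} and~\ref{b} together with the definitions. The relation $H \leqalg \CL_K(H)$ is immediate from Theorem~\ref{alg cl}.

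To show $\OCL_K(H) \leqff K$, write $M = \OCL_K(H)$. Since $M$ belongs to the finite set $\Omega(H)$, it is finitely generated, so by Theorem~\ref{alg cl} the algebraic closure $\CL_K(M)$ of $M$ inside $K$ is defined and satisfies $M \leqalg \CL_K(M) \leqff K$. Because algebraic extensions are in particular onto, we have $M \leqont \CL_K(M)$; combining this with the definitional $H \leqont M$ via the transitivity of onto extensions (Proposition~\ref{b}(i)) yields $H \leqont \CL_K(M)$. But then $\CL_K(M)$ is an onto extension of $H$ contained in $K$ and, by the maximality built into the definition of $M = \OCL_K(H)$, we must have $\CL_K(M) \leq M$, forcing $M = \CL_K(M) \leqff K$. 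The very same argument, carried out inside $N = \OCL_K(H)$ with the fully onto notion replacing the onto one (using Proposition~\ref{b}(i$'$) in place of Proposition~\ref{b}(i)), gives $\FOCL_K(H) \leqff N$; note that $\FOCL_K(H) \leq N$ here because every fully onto extension is onto.

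With these free-factor steps in hand, the remaining relations are immediate. From $\CL_K(H) \leqff K$ and $\CL_K(H) \leq \FOCL_K(H) \leq K$, Proposition~\ref{compositionfreefactors}(ii$'$) yields $\CL_K(H) \leqff \FOCL_K(H)$. Since $H \leqfont \FOCL_K(H)$ by definition, Proposition~\ref{b}(ii$'$) applied to the intermediate $\CL_K(H)$ gives $\CL_K(H) \leqfont \FOCL_K(H)$. Analogously, from $H \leqont \OCL_K(H)$ and Proposition~\ref{b}(ii) applied to the intermediate $\FOCL_K(H)$ we obtain $\FOCL_K(H) \leqont \OCL_K(H)$. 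The core difficulty of the argument is the free-factor step: the trick is to refine the given closure by taking its algebraic closure in the surrounding subgroup, and then to use the maximality of the closure operator to collapse this refinement back onto the closure itself.
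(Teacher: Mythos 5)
Your proof is correct and follows essentially the same route as the paper. The key move in both — applying Theorem~\ref{alg cl} to intercalate an algebraic closure ($\CL_K(\OCL_K(H))$, resp.\ $\CL_{\OCL_K(H)}(\FOCL_K(H))$), observing that algebraic extensions are in particular onto/fully onto, and then invoking the maximality of the onto (resp.\ fully onto) closure to collapse the intercalated subgroup back down — is exactly what the paper does, merely with different letter names for the intermediate subgroups.
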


\begin{proof}
The chain of inclusions is clear by construction. Also, by construction, we have
$H\leqalg \CL_{K}(H)$, $H\leqfont \FOCL_{K}(H)$ and $H\leqont \OCL_{K}(H)$ therefore, by Proposition~\ref{b}(ii)(ii'), $\CL_{K}(H)\leqfont \FOCL_{K}(H)$ and $\FOCL_{K}(H) \leqont \OCL_{K}(H)$.

To see the free factors, observe that $\CL_{K}(H)\leqff K$ and so, by Proposition~\ref{compositionfreefactors}(ii'), $\CL_K(H)\leqff \FOCL_K(H)$ and $\CL_K(H)\leqff \OCL_K(H)$. The remaining two free factors come from transitivity of onto and fully onto extensions (see Proposition~\ref{b}(i)(i')) and from applying Theorem~\ref{alg cl} to the corresponding extensions: there exists a subgroup $L$ such that $\FOCL_{K}(H) \leqalg L \leqff \OCL_{K}(H)$ so, $H\leqfont \FOCL_{K}(H) \leqfont L\leqslant K$ hence, $H\leqfont L\leqslant K$ and, by maximality, $\FOCL_{K}(H)=L\leqff \OCL_{K}(H)$. Similarly,
there exists a subgroup $M$ such that $\OCL_{K}(H) \leqalg M \leqff K$ so, $H\leqont \OCL_{K}(H) \leqont M\leqslant K$ hence, $H\leqont M\leqslant K$ and, again by maximality, $\OCL_{K}(H)=M\leqff K$.
\end{proof}

\begin{cor}
The three closures of a subgroup $H\leqfg F_A$ with respect to the ambient free group $F_A$ do coincide: $\CL_{F_A}(H)=\FOCL_{F_A}(H)=\OCL_{F_A}(H)\leqff F_A$.
\end{cor}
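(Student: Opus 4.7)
The plan is to first assemble what the preceding proposition already gives us when $K=F_A$, namely the chain
$$H \,\leqalg\, \CL_{F_A}(H) \,\leqff\, \FOCL_{F_A}(H) \,\leqff\, \OCL_{F_A}(H) \,\leqff\, F_A.$$
By transitivity of free extensions (Proposition~\ref{compositionfreefactors}(i$'$)), all three closures are free factors of $F_A$ containing $H$. Since Theorem~\ref{alg cl} characterises $\CL_{F_A}(H)$ as the \emph{smallest} free factor of $F_A$ containing $H$, one direction of both equalities is immediate:
$$\CL_{F_A}(H) \,\leqslant\, \FOCL_{F_A}(H) \,\leqslant\, \OCL_{F_A}(H).$$
Thus the whole game reduces to proving the reverse inclusion $\OCL_{F_A}(H) \leqslant \CL_{F_A}(H)$, which I expect to be the only non-trivial step.

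For this, I would argue by contradiction using the same basis-splitting trick that was used for the (e)$\Rightarrow$(a) implication of the first proposition of Section~2. Suppose, to the contrary, that $\CL_{F_A}(H) \lneq \OCL_{F_A}(H)$. Using $\CL_{F_A}(H) \leqff \OCL_{F_A}(H) \leqff F_A$, decompose $F_A = \CL_{F_A}(H) * N * M$, where $N \neq 1$ is a free complement of $\CL_{F_A}(H)$ inside $\OCL_{F_A}(H)$, and $M$ is a free complement of $\OCL_{F_A}(H)$ inside $F_A$. Choose then a basis $A' = A'_0 \sqcup A'_N \sqcup A'_M$ of $F_A$ respecting this decomposition (bases of the three free factors, merged). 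Since $H \leqslant \CL_{F_A}(H)$, every element of $H$ is a word on $A'_0$, so the Stallings graph $\Gamma_{A'}(H)$ only carries edges labelled from $A'_0$. On the other hand, $A'_0 \sqcup A'_N$ is a basis of $\OCL_{F_A}(H)$, so $\Gamma_{A'}(\OCL_{F_A}(H))$ consists of a single vertex with one positive loop per letter of $A'_0 \sqcup A'_N$. Since $A'_N \neq \emptyset$, the $A'$-homomorphism $\theta_{H,\OCL_{F_A}(H)}$ cannot hit any of the loops labelled by $A'_N$, contradicting $H \leqont \OCL_{F_A}(H)$ --- which itself holds by the very definition of $\OCL_{F_A}(H)$ as the largest onto extension of $H$ inside $F_A$, with Proposition~\ref{a}(i) ensuring that this join of onto extensions is again onto.

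This forces $\CL_{F_A}(H) = \OCL_{F_A}(H)$, and the sandwich squeezes $\FOCL_{F_A}(H)$ to the same value, giving the triple equality. The trailing $\leqff F_A$ is then just the defining property of $\CL_{F_A}(H)$ from Theorem~\ref{alg cl}. What makes the argument work, and why it is essentially trivial in this specific ``closing up to the whole $F_A$'' setting, is precisely the availability of a free complement $M$ of $\OCL_{F_A}(H)$ inside $F_A$: it lets us promote the free decomposition $\OCL_{F_A}(H) = \CL_{F_A}(H) * N$ to an honest basis of $F_A$ adapted to the hypothetical gap $N$, and thereby exhibit a label that $\theta_{H,\OCL_{F_A}(H)}$ cannot cover.
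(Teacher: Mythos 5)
Your proof is correct and takes essentially the same approach as the paper: both reduce the claim to the inclusion $\OCL_{F_A}(H)\leqslant \CL_{F_A}(H)$ and prove it by extending a basis of the free factor $\CL_{F_A}(H)$ to a basis of $F_A$, observing that $\Gamma_{A'}(H)$ carries only labels from the $\CL_{F_A}(H)$-part, and invoking ontoness. The paper's version is slightly leaner --- it does not argue by contradiction and does not need to further split the complement into $N*M$, since $\Gamma_{A'}(\OCL_{F_A}(H))$ having no edges labelled by the extra letters already forces $\OCL_{F_A}(H)\leqslant\langle a'_1,\ldots,a'_r\rangle=\CL_{F_A}(H)$ directly --- but the underlying mechanism is identical to yours.
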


\begin{proof}
It is enough to see the inclusion $\OCL_{F_A}(H) \leqslant \CL_{F_A}(H)$. In fact, $\CL_{F_A}(H)\leqff F_A$ and we consider a basis $\{a'_1, \ldots ,a'_r\}$ for $\CL_{F_A}(H)$ and an extension of it to a basis $A'=\{a'_1, \ldots ,a'_r, a'_{r+1}, \ldots ,a'_n\}$ of $F_A$. By construction $H\leqont \OCL_{F_A}(H)$ so, in particular, the $A'$-homomorphism $\theta_{H, \OCL_{F_A}(H)}\colon \Gamma_{A'}(H)\twoheadrightarrow \Gamma_{A'}(\OCL_{F_A}(H))$ is onto. But $H\leqslant \CL_{F_A}(H)=\langle a'_1, \ldots ,a'_r\rangle$ so $\Gamma_{A'}(H)$, and hence $\Gamma_{A'}(\OCL_{F_A}(H))$, has no edges labelled by $a'_{r+1}, \ldots ,a'_n$. Therefore, $\OCL_{F_A}(H)\leqslant \langle a'_1, \ldots ,a'_r\rangle =\CL_{F_A}(H)$.
\end{proof}

\medskip

Arriving at this point, it seems natural to ask which could be the possible dual notions to the concepts of onto and fully onto, in the same way that free extensions are dual to algebraic extensions. Fulfilling this duality, Theorem~\ref{alg cl} states that the $K$-algebraic closure of $H\leqslant K$ can alternatively be reached by taking the biggest algebraic extension of $H$ contained in $K$, or the smallest free factor of $K$ containing $H$. Is there a dual notion for onto (resp., fully onto) extensions in this sense? i.e., is there a property ${\mathcal P}$ of extensions for which $\OCL_K(H)$ (resp., $\FOCL_K(H)$) is the smallest ${\mathcal P}$-subgroup of $K$ containing $H$?

Since being onto is less restrictive than being algebraic, the dual notion should be a more restrictive notion than being a free factor. We presume the reader can easily imagine a very natural candidate for this possible dual notion: we could say that an extension $H\leqslant K\leqslant F_A$ is \emph{into} if $\theta_{H,K}\colon \Gamma_{A'}(H)\hookrightarrow \Gamma_{A'}(K)$ is injective for every basis $A'$ of $F_A$. And similarly, for \emph{fully into}.

However, somehow against intuition, this dualization project fails dramatically. After studying these two concepts, and proving some promising properties (very similar to those of free factors), we realized that the only into extension $H\leqslant K\leqslant F_A$ is the equality $H=K$, by first proving that $H\leqslant_{\textsf{into}} K$ implies that $\Gamma_{A'}(H)$ must be a \emph{full} subgraph of $\Gamma_{A'}(K)$ for every ambient basis $A'$, and then seeing that this situation forces equality $H=K$. So, these notions of into and fully into are trivial and, by no means, can they provide a Theorem similar to~\ref{alg cl}.

At the time of writing the present paper, a subsequent preprint Kolodner~\cite{kolodner2} appeared providing an easier proof for this same fact. We redirect the reader there.

\begin{prop}[{Kolodner~\cite[Thm.~21]{kolodner2}}]
For every extension of finitely generated subgroups $H\leqfg K\leqfg F_A$, there always exists a basis $A'$ of the ambient group $F_A$ such that the $A'$-homomorphism $\theta_{H,K}\colon \Gamma_{A'}(H)\twoheadrightarrow \Gamma_{A'}(K)$ is onto. \qed
\end{prop}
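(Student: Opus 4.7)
The plan is to use Takahasi's theorem to reduce the problem to the case of a free factor extension, and then to construct a suitable ambient basis by a Nielsen-type transformation of $F_A$.

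First, I would apply Takahasi's theorem (Theorem~\ref{alg cl}) to interpolate $H\leqalg L\leqff K$, where $L=\CL_K(H)$. Since $\AAEE(H)\subseteq\Omega(H)$ (the inclusion established in the Proposition right after the definition of onto extensions), the $A'$-homomorphism $\theta_{H,L}$ is onto for every basis $A'$ of $F_A$. Because $\theta_{H,K}$ factors as $\theta_{H,L}\circ\theta_{L,K}$, it suffices to find a single basis $A'$ in which $\theta_{L,K}$ is onto; composition with the always-onto $\theta_{H,L}$ then yields the conclusion for $H\leq K$. Thus the problem reduces to the free factor case $L\leqff K\leq F_A$.

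For this reduced problem, I would write $K=L*M$ with fixed bases $L=\langle l_1,\ldots,l_p\rangle$ and $M=\langle m_1,\ldots,m_q\rangle$; the case $q=0$ is immediate, so assume $q\geqslant 1$. The aim becomes to find $\varphi\in\aut(F_A)$ such that the generators of $L\varphi$ collectively trace out every edge of $\Gamma_A(K\varphi)$; translating back via $A'=A\varphi^{-1}$ then produces a basis in which $\theta_{L,K}$ is onto. A natural candidate for $\varphi$ is a Nielsen-type transformation pushing each complement generator $m_j$ into a word involving letters already appearing in the $l_i$'s, so that the Stallings-graph edges corresponding to $M$ lie in the image of $\theta_{L\varphi,K\varphi}$. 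The synchronized folding procedure introduced earlier in Section~3 provides the right framework to track how these edges are produced and absorbed under basis change.

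The hardest part will be giving an explicit recipe for $\varphi$ and verifying the claimed surjectivity. A natural inductive setup counts the edges of $\Gamma_A(K\varphi)$ missed by $\theta_{L\varphi,K\varphi}$, but care is needed because both source and target graphs change simultaneously under $\varphi$, so one must ensure that a Nielsen move absorbing a missing edge does not introduce new ones. Kolodner's subsequent preprint~\cite{kolodner2} bypasses these bookkeeping difficulties with a direct combinatorial argument, and we refer the reader there for the complete proof.
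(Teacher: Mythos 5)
The paper does not supply a proof of this proposition: it is quoted with a \verb|\qed| directly from Kolodner's preprint~\cite{kolodner2}. So there is no in-text argument to compare against; what can be assessed is whether your outline is a complete and correct alternative.

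Your reduction step is sound. Since $L=\CL_K(H)$ satisfies $H\leqalg L$, and algebraic extensions are onto extensions (the chain $\AAEE(H)\subseteq f\Omega(H)\subseteq\Omega(H)$), the map $\theta_{H,L}$ is onto in \emph{every} ambient basis; composing with $\theta_{L,K}$ then transfers surjectivity, so it does suffice to find a single basis $A'$ in which $\theta_{L,K}$ is onto. (A small caveat worth flagging: when $H=1$ and $K\neq 1$ this reduction gives $L=1$ and the free-factor claim is plainly false for $1\leqff K$, so the proposition implicitly assumes $H$ nontrivial; this is an issue with the statement itself rather than with your reduction.)

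The genuine gap is that the free factor case is precisely where all the content lies, and your sketch does not close it. The phrase ``a Nielsen-type transformation pushing each complement generator $m_j$ into a word involving letters already appearing in the $l_i$'s'' is not a construction; it is a description of the desired outcome. The bookkeeping worry you raise --- that a Nielsen move absorbing one missed edge of $\Gamma_{A'}(K)$ may create new ones, because source and target graphs both move under $\varphi$ --- is exactly the obstruction, and it is not addressed. Without a concrete induction (for instance, a monovariant that provably decreases under a well-chosen elementary Nielsen move, together with a verification via the synchronized folding machinery that the move has the claimed effect on both $\Gamma_{A'}(L)$ and $\Gamma_{A'}(K)$), the argument does not go through. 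As written, the proposal proves the reduction and then defers the main step to Kolodner, so it is not a proof of the proposition.
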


\begin{cor}[{Kolodner~\cite[Cor.~22]{kolodner2}}]
Let $H\leqfg K\leqfg F_A$. If the $A'$-homomor\-phism $\theta_{H,K}\colon \Gamma_{A'}\hookrightarrow \Gamma_{A'}(K)$ is injective for every basis $A'$ of $F_A$ then, $H=K$. \qed
\end{cor}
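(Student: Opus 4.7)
The argument is essentially immediate from the preceding Proposition; I would simply combine both statements directly. The plan is to fix $H\leqfg K\leqfg F_A$ satisfying the hypothesis that $\theta_{H,K}\colon \Gamma_{A'}(H) \hookrightarrow \Gamma_{A'}(K)$ is injective for \emph{every} basis $A'$ of $F_A$, and then invoke the preceding Proposition to produce \emph{some} basis $A'_0$ for which $\theta_{H,K}\colon \Gamma_{A'_0}(H)\twoheadrightarrow \Gamma_{A'_0}(K)$ is onto. For this particular choice $A'_0$, the map $\theta_{H,K}$ is simultaneously injective (by hypothesis, applied to the basis $A'_0$) and surjective (by the Proposition), hence a bijection on both the vertex set and the edge set.

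The next step is to observe that a bijective, basepoint-preserving $A'_0$-homomorphism between two connected, deterministic, trim $A'_0$-automata is automatically an isomorphism of Stallings graphs: by definition, an $A'_0$-homomorphism preserves incidences and labels, so its inverse as a set map also preserves incidences and labels. Consequently $\Gamma_{A'_0}(H)$ and $\Gamma_{A'_0}(K)$ are isomorphic as Stallings $A'_0$-automata, and therefore read the same language. Invoking the Stallings bijection (Theorem~1), this forces $H = \LL(\Gamma_{A'_0}(H)) = \LL(\Gamma_{A'_0}(K)) = K$, as required.

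Regarding the main obstacle: all the substantive content is contained in the preceding Proposition, which asserts the existence of a basis making $\theta_{H,K}$ surjective for every proper pair $H\leqfg K\leqfg F_A$. The corollary itself only exploits the elementary fact that ``injective plus surjective equals bijective,'' combined with the rigidity of Stallings graphs (an $A$-isomorphism of Stallings automata forces equality of the corresponding subgroups). Hence I would not expect any technical difficulty in the corollary beyond carefully citing the Proposition and the Stallings bijection; the conceptual heart of the triviality of into-extensions sits one level up, in the Proposition just above it.
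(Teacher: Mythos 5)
Your proof is correct and is precisely the intended derivation: the paper itself supplies no argument for the Corollary (it is quoted from Kolodner with an immediate \qed), and the natural proof is exactly the one you give, namely combining the preceding Proposition (which supplies a basis $A'_0$ making $\theta_{H,K}$ onto) with the hypothesis (making it injective for that same $A'_0$), then observing that a bijective $A'_0$-homomorphism of Stallings automata is an isomorphism and so forces $H=K$ via the Stallings bijection. You also correctly locate the substantive content in the Proposition rather than the Corollary.
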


\section{Open questions}

We conclude with a list of interesting related questions, which remain open as far as we know. 

\begin{qu}
\emph{Is there an algorithm to decide whether a given extension $H\leqfg K\leqfg F_A$ is onto? and fully onto? Is there an algorithm to compute onto and fully onto closures of given extensions $H\leqslant K$ of finitely generated subgroups?}
\end{qu}

\begin{rem}
In the situation $H\leqfg K\leqfg F_A$, we can compute $\OO_{A}(H)$ and keep all those overgroups of $H$ contained in $K$. This provides a finite list of subgroups containing both $\OCL_K(H)$ and $\FOCL_K(H)$. However, to finish deciding who they are among the candidates in the list, we would need an algorithm deciding whether a given extension $H\leqfg L\leqfg F_A$ is onto (resp., fully onto) or not. A procedure for this was designed by Kolodner in~\cite{kolodner}, which may stop and give the correct answer, or may work forever (the example in Theorem~\ref{kolodner} was obtained, precisely, as a stopping instance for this procedure). To make it into a true algorithm we would need a proof that it always stops, or an additional criterium to kill the process at certain point, and get the answer in finitely many steps.
\end{rem}

\begin{qu}
\emph{What is the algebraic meaning of an extension being onto, or fully onto? Is it possible to characterize the facts $H\leqont K\leqslant F_A$ and $H\leqfont K\leqslant F_A$ without refereing to the bases of $F_A$?}
\end{qu}

\begin{rem}
These two definitions are canonical in the sense that they do not depend on any prefixed ambient basis. It would be interesting, then, to characterize them in algebraic terms with respect to $H$ and $K$, but not talking about ambient bases (like the definition of algebraic extension). It seems, however, that free factors will probably not help in this, since there exist onto, and even fully onto, extensions being simultaneously free factors. A tricky detail to take into account here is that fully onto transfers through free extensions while onto does not, i.e., $H\leqfont K$ as subgroups of $F_A$ implies $H\leqfont K$ as subgroups of $F_B$ for any $B\supseteq A$, while the same is not true in general for onto extensions.
\end{rem}

\begin{qu}
\emph{Is there a notion dual to onto, or dual to fully onto, which may lead to a Theorem similar to~\ref{alg cl}?
}\end{qu}

\section*{Acknowledgements}

The first named author acknowledges support from the Spanish Agencia Estatal de Investigaci\'on within the grants for PhD candidate researchers FPI 2019 (AEI/FSE, EU). The second named author acknowledge partial support from the Spanish Agencia Estatal de Investigaci\'on, through grant MTM2017-82740-P (AEI/ FEDER, UE), and also from the Graduate School of Mathematics through the ``Mar\'{\i}a de Maeztu" Programme for Units of Excellence in R\&D (MDM-2014-0445).


\begin{thebibliography}{97}

\bibitem{KM} I. Kapovich and A. Miasnikov. \newblock Stallings foldings and subgroups of free groups. \newblock \emph{J. Algebra} \textbf{248}(2) (2002), 608--668.

\bibitem{kolodner} N. Kolodner. \newblock On algebraic extensions and decomposition of homomorphisms of free groups. \newblock \emph{Journal of Algebra} \textbf{569} (2021), 595--615.

\bibitem{kolodner2} N. Kolodner. \newblock Classes of free group extensions. \newblock \emph{Preprint}, available at https://arxiv.org/abs/ 2011.12229.

\bibitem{MSW} S. Margolis, M. Sapir, and P. Weil. \newblock Closed subgroups in pro-$V$ topologies and the extension problems for inverse automata. \newblock \emph{Internat. J. Algebra Comput.} \textbf{11}(4) (2001), 405--445.

\bibitem{MVW} A. Miasnikov, E. Ventura, and P. Weil. \newblock Algebraic extensions in free groups. \newblock Algebra and Geometry in Geneva and Barcelona, \emph{Trends in Mathematics}, Birkha\"{u}ser (2007), 225--253.

\bibitem{tfm} S. Mijares. \newblock Algebraic extensions in free groups and Stallings graphs. \newblock Masters thesis, Universitat Polit\`{e}cnica de Catalunya (2020).

\bibitem{ppuder} O. Parzanchevski and D. Puder. \newblock Stallings graphs, algebraic extensions, and primitive elements in $F_{2}$. \newblock \emph{Math. Proc. Cambridge Phil. Society} \textbf{157}(1) (2014), 1--11.

\bibitem{P} D. Puder. \newblock Primitive words, free factors and measure preservation. \newblock \emph{Israel J. Math.} \textbf{201}(1) (2014), 25-–73.

\bibitem{RVW} A. Roig, E. Ventura, P. Weil. \newblock On the complexity of the Whitehead minimization problem. \newblock \emph{Internat. J. Algebra Comput.} \textbf{17}(8) (2007), 1611--1634.

\bibitem{SW} P.V. Silva, P. Weil. \newblock On an algorithm to decide whether a free group is a free factor of another. \newblock \emph{RAIRO - Theoretical informatics and applications - Informatique th\'eorique et applications} \textbf{42}(2) (2008), 395--414.

\bibitem{Sta} J.R. Stallings. \newblock Topology of finite graphs. \newblock \emph{Inventiones Math.} \textbf{71} (1983), 551--565.

\bibitem{Tak} M. Takahasi. \newblock Note on chain conditions in free groups. \newblock \emph{Osaka Math. Journal} \textbf{3}(2) (1951), 221--225.

\bibitem{V} E. Ventura. \newblock On fixed subgroups of maximal rank. \newblock \emph{Comm. Algebra} \textbf{25} (1997), 3361--3375.
\end{thebibliography}
\end{document}